\theoremstyle{remark}  %it changes the text font from italics to roman in the atheorem environment
\newenvironment{my.proof}[1][Proof.]{\begin{proof}[\rm
\textbf{\phantom{x}#1}]}{\end{proof}} %% best so far. Do not change it
\theoremstyle{plain}
\newtheorem{theorem}{Theorem}[section]
\newtheorem{lemma}[theorem]{Lemma}
\newtheorem{proposition}[theorem]{Proposition}
\newtheorem{corollary}[theorem]{Corollary}
\theoremstyle{definition}
\newtheorem{definition}[theorem]{Definition}
\newcommand{\UH}{{U(\mathcal{H})}}
\newcommand{\PUH}{{PU(\mathcal{H})}}
\newcommand{\Fred}{{\mathrm{Fred}(\mathcal H)}}
\newcommand{\BH}{{\mathcal{B}(\mathcal{H})}}
\newcommand{\HH}{\mathcal{H}}
\newcommand{\CC}{\mathcal{C}}
\newcommand{\SSS}{\mathcal{S}}
 \newcommand{\map}{\mathrm{map}}
\newcommand{\Ext}{\mathrm{Ext}}
\newcommand{\homst}[2]{\mathrm{hom}_{\rm{st}}(#1,#2)}
\newcommand{\sg}{\mathrm{Gr}(\HH)}
\newcommand{\Gr}{\mathrm{Gr}}
\newcommand{\im}{\mathrm{Im}\, }
\newcommand{\Id}{\mathrm{Id}}
\newcommand{\D}{\mathscr{D}}
\newcommand{\Irrep}{\mathrm{Irrep}}
\def\too{\longrightarrow}
\def\mapstoo{\longmapsto}
\newcommand{\inforevista}{\scriptsize  Rev.\ Acad.\
Colomb.\ Cienc.\ xx(xx) xxx  2016}
\begin{document}
%\printinunitsof{mm}\prntlen{\textwidth} % check the textwidth length.
%
%% these are fancyhdr related
%
\pagenumbering{arabic}
\fancypagestyle{plain}{%
\fancyhf{} % clear all header and footer fields
\fancyfoot[R]{\thepage} %
\fancyhead[L]{\inforevista}
\renewcommand{\headrulewidth}{0pt}
\renewcommand{\footrulewidth}{0pt}}
%%%
\thispagestyle{plain} %for first page
%%%
\pagestyle{fancy}     %for all pages except first
\fancyhead{} % clear all header fields
\renewcommand{\headrulewidth}{0.0pt} %remove a decorative line from header
\fancyhead[LO]{\inforevista}
\fancyhead[RO]{\scriptsize Topological properties of spaces of projective unitary representations}
\fancyhead[LE]{\scriptsize Jes\'us  Espinoza \& Bernardo Uribe}
\fancyhead[RE]{\inforevista}
\fancyfoot{} % clear all footer fields
\fancyfoot[LE]{\thepage}
\fancyfoot[RO]{\thepage}
%%
%\vspace*{0.3cm}
\begin{flushright}
\rule[-0.5ex]{0.5ex}{3.0ex} {\large Ciencias exactas}
\end{flushright}
\vspace*{0.3cm}

\begin{center}
{\LARGE \textbf{Topological properties of spaces of projective unitary representations \\}}
\end{center}

\vspace{3mm}
\begin{center}
\textbf{\small Jes\'us Espinoza${}^{1},$ and Bernardo Uribe${}^{2,}\footnotemark[1]$}
\vspace{3mm}

{\scriptsize
${}^{1}$Licenciatura en Matem\'aticas Aplicadas, Universidad  del Papaloapan, Av. Ferrocarril s/n. 68400. Ciudad Universitaria. Campus Loma Bonita, Oaxaca, M\'exico, jespinoza@unpa.edu.mx\\
${}^{2}$Departamento de Matem\'aticas y Estad\'istica, Universidad del Norte, Km 5 V\'ia Puerto Colombia, Barranquilla, Colombia,
bjongbloed@uninorte.edu.co\\
}
\end{center}

\begin{Small}
\vspace{3.0mm}
\rule{\textwidth}{0.4pt}

%\begin{adjustwidth}{10mm}{10mm}
\begin{center}
\begin{minipage}{14cm}
\vspace{3mm}
%\flushleft{}
\textbf{Abstract}
\vspace{3mm}

{Let $G$ be a compact and connected Lie group and $\PUH$ be the
group of projective unitary operators on an infinite dimensional separable Hilbert space $\HH$ endowed
with the strong operator topology. We study the space $\homst{G}{\PUH}$
of continuous homomorphisms from $G$ to $\PUH$ which are stable, namely  the
homomorphisms whose induced representation contains each irreducible
representation an infinitely number of times. We show that the connected 
components of $\homst{G}{\PUH}$ are parametrized by the isomorphism classes
of $S^1$-central extensions of $G$, and that each connected component
has the group $\hom(G,S^1)$ for fundamental group and trivial higher homotopy groups.
We study the conjugation map $\PUH \to \homst{G}{\PUH}$,
$F \mapsto F\alpha F^{-1}$, we show that it has no local cross sections 
and we prove that for a map $B \to \homst{G}{\PUH}$ with $B$ paracompact of finite covering dimension,  local
lifts to $\PUH$ do exist.
\\[1mm]

\textbf{Key words:} Unitary Representation, Projective Unitary Representation.}
\vspace{3mm}

\textbf{Propiedades topol\'ogicas del espacio de representaciones
unitarias proyectivas.}
\vspace{3mm}

%\flushleft{}
\textbf{Resumen}
\vspace{3mm}

Sea $G$ un grupo de Lie compacto y conexo y $\PUH$ el grupo de operadores
proyectivos e unitarios en un espacio de Hilbert separable e infinito dimensional $\HH$,
 provisto de la topolog\'ia fuerte de operadores. Estudiamos el espacio $\homst{G}{\PUH}$
de homomorfismos continuos desde $G$ a $\PUH$ que son estables, es decir homomorfismos
cuyas representaciones inducidas contienen cada representaci\'on irreducible un n\'umero infinito de veces.
Demostramos que las componentes conexas del espacio $\homst{G}{\PUH}$ est\'an parametrizadas
por las clases de isomorf\'ia de extensiones centrales de $G$ por el grupo $S^1$, y que cada
componente conexa tiene por grupo fundamental al grupo $\hom(G,S^1)$ y sus grupos de homotop\'ia superiores son triviales.
Estudiamos la aplicaci\'on conjugaci\'on $\PUH \to \homst{G}{\PUH}$,
$F \mapsto F\alpha F^{-1}$, demostramos que no tiene secciones locales 
y demostramos que para cualquier aplicaci\'on continua $B \to \homst{G}{\PUH}$ con $B$ paracompacto con dimensi\'on
de cubierta finita (dimensi\'on de Lebesgue), los levantamientos locales a $\PUH$ 
s\'i existen.
\\[1mm]

\textbf{Palabras clave:} Representaci\'on Unitaria, Representaci\'on 
Proyectiva Unitaria.\\
\end{minipage}
\end{center}

%\end{adjustwidth}
\rule{\textwidth}{0.4pt}
\end{Small}

\begin{small}
\columnsep 0.5 cm
\begin{multicols}{2}
%\sloppy

\setlength{\parskip}{.3cm}

\section{\small Introduction}
The motivation to study the topological properties of the space $\homst{G}{\PUH}$
of stable homomorphisms from a compact Lie group $G$ to
the group of projective unitary operators on a Hilbert space $\HH$
endowed with the topology of pointwise convergence, comes
from realm of equivariant K-theory.

By a theorem of Atiyah and J{\"a}nich \citet{Janich} the $K$-theory groups of a topological space $X$
may be obtained as the homotopy groups of the space $$\map(X, \Fred)$$ of continuous maps from $X$ to 
the space $\Fred$ of Fredholm operators on $\HH$. Given any projective unitary bundle over
$X$, namely a $\PUH$-principal bundle $\PUH \to P \to X$, we may define a twisted version
of the  K-theory groups by taking the homotopy groups of the space of sections
of the associated $\Fred$ bundle $$P \times_\PUH \Fred \to X.$$ These groups are
called the twisted K-theory groups and they define a parametrized cohomology theory
in the sense of \citet{MaySigurdsson} whenever we consider the category 
of pairs $(X,f)$ with $X$ a topological space and $f:X \to B\PUH$ a map which recovers 
the $\PUH$ bundle $P$.

In the equivariant setup, namely when we consider the category of spaces with $G$ actions, 
the definition of the twisted equivariant K-theory is more intricate. We need to consider
$G$ equivariant projective unitary stable bundles, namely $G$ equivariant $\PUH$-principal
bundles $P \to X$, such that the induced local homomorphism $G_x \to \PUH$ is stable
for $G_x$ the isotropy group of any $x \in X$, in order to define the twisted equivariant
K-theory groups as the homotopy groups of the $G$ invariant sections of the associated
bundle $$P \times_\PUH \Fred \to X.$$ To prove that the twisted equivariant K-theory
is a parametrized cohomology theory in the sense of \citet{May:EHCT} we would
need to construct a  universal $G$ equivariant projective
unitary stable bundle as it was done in the non-equivariant case.
The construction of this universal space can be done using classifying spaces of families
of subgroups as it was done in \citet{LueckUribe}, though the property of being locally trivial
depends on the existence of cross local sections on the conjugation map \begin{align*}
\PUH &\to 
\homst{G}{\PUH}\\ 
F & \mapsto F\alpha F^{-1}. \end{align*}

Unfortunately such local cross sections fail to exist in general, as we shown in Theorem \ref{no sections homst}, and therefore the universal space that we can construct using families of subgroups
fails to be locally trivial. 
Nevertheless when we restrict ourselves to consider only maps $$B \to 
\homst{G}{\PUH}$$ with $B$ paracompact, we prove in Theorem \ref{theorem existence local section compact and connected for PUH} that these maps have indeed local
lifts to $\PUH$. The previous result would imply that the universal space constructed
using classifying spaces of families of subgroups done in \citet{LueckUribe} would
become a universal $G$ equivariant projective unitary stable bundle for paracompact spaces,
and hence, when restricted to paracompact spaces, 
the twisted equivariant K-theory would be a parametrized equivariant cohomology theory.
We have not proven this last statement, but we believe it is true.

Besides the application of our results to K-theory, we also show the following facts.
We study the space of stable unitary representations $\homst{G}{\UH}$ on
a Hilbert space $\HH$ and we generalize results of Dixmier-Douady 
 on the infinite grassmannian $\sg$ to the space of unitary representations.
 We show in Corollary \ref{corollary existence section compact and connected}
 that $\homst{G}{\UH}_{\mathcal C}$ is weakly homotopy equivalent to a point
 for any choice of irreducible representations $\CC \subset \Irrep(G)$, we show that
the space $\homst{G}{\PUH}$ has as many connected components as $S^1$-central
extensions of $G$ and that each connected component has $\hom(G,S^1)$ for fundamental group
and trivial higher homotopy groups. 

The article is organized as follows. In Section \ref{Chapter operators} we recall the properties
of the group of unitary operators endowed with the strong operator topology and we define the infinite
grassmannian. In Section \ref{Chapter continuous fields} we recall the definition of continuous field
of Hilbert spaces done by Dixmier-Douady in \citet{DixDou} and we show the properties
of the infinite grassmannian with respect to the existence of sections on the unitary group. In Section
\ref{Chapter unitary representations} we study the topological properties of the spaces
$$\homst{G}{\UH}$$ of stable continuous homomorphisms from a compact Lie group to the group of unitary 
operators. In Section \ref{Chapter projective unitary representations}
we study the topological properties of the space $$\homst{G}{\PUH}$$ of stable continuous homomorphisms from a compact
Lie group to the group of projective unitary operators. Finally, in Section \ref{Chapter applications}
we show some applications to twisted equivariant K-theory of the results of the previous sections
and we conclude with some ideas for further research.

{\bf Acknowledgements:} The first author acknowledges the support of a CONACyT postdoctoral fellowship and of the Centro de Ciencias Matem\'aticas of the UNAM. The second author acknowledges the financial support of the Max Planck Institute for Mathematics in Bonn and of COLCIENCIAS through contract number
FP44842-617-2014.

\footnotetext[1]{Corresponding author: Bernardo Uribe, bjongbloed@uninorte.edu.co. Received November ??, 2015. Accepted June 7, 2016}

\section{\small Spaces of operators and the infinite grassmannian} \label{Chapter operators}
Let $\HH$ be a separable and infinite dimensional complex Hilbert space and denote by $\BH$ the vector space of bounded linear operators. The inner product $\langle,\rangle$ on $\HH$ induces the norm $|x|:=\sqrt{\langle x,x \rangle}$ for  $ x \in \HH$, and we have the standard norm on the space of bounded linear operators $$|T| = \sup_{|x| \leq 1} |Tx|.$$
 The space $\BH$ can be endowed with several topologies, among them we have the \textit{strong operator topology} and the \textit{compact-open topology}. These topologies are of interest when studying
 principal bundles and  therefore are the ones of interest in this paper.

Recall that in the strong operator topology, a subbasic open set is given by $$V(T,\varepsilon;x) = \{ S\in \BH \mid |Sx-Tx|<\varepsilon\}$$
for any $T\in \BH$, $x\in \HH$ and $\varepsilon >0$. In this topology, a sequence of bounded operators $\{T_n\}$ converges to $T\in \BH$ if and only if $T_nx \to Tx$ for all $x\in \HH$. On the other hand, a subbase for the compact-open topology on $\BH$ is given by the family of sets
$$V(K,A) = \{S\in \BH \mid S(K) \subset A\},$$ 
where $K\subset \HH$ is a compact set and $A \subset \HH$ is an open set. Note that in the compact-open topology a sequence of bounded operators $\{T_n\}$ converges to $T\in \BH$ if and only if $T_n|_K \to T|_K$ uniformly for every compact set $K \subset \HH$.

Neither of the previous topologies on $\BH$ are equivalent.
 However, if we restrict to the group of unitary operators on $\HH$,
$$\UH=\{ U \in \BH \mid U U^*=U^*U= \rm{Id}_\HH \},$$
then we know that the strong operator topology and the compact-open topology agree on the group $\UH$.
 Moreover, the group $\UH$ endowed with any of these topologies is a Polish group, i.e. a completely
  metrizable topological group (see \cite{Espinoza-Uribe:14}), and furthermore contractible \cite[\S11, Lem. 3]{DixDou}.
Let us recall the metric on $\UH$ which recovers the strong operator topology since 
it will be needed in what follows.

Let $\{e_j\}_{j \in \mathbb N}$ be an orthonormal base of $\HH$ and consider the embedding $\Psi: \UH \to \HH^{\mathbb N}$ with $\Psi(T)= ( Te_j)_{j \in \mathbb N}$. Any metric
on $\HH^{\mathbb N}$ which induces the product topology induces also a metric on $\UH$
compatible with the strong operator topology.
Therefore for any pairs of operators $T, U \in \UH$ we may define their distance by the equation
\begin{eqnarray} \label{metric UH}
\langle T, U \rangle := \sup_{n \in \mathbb N} \left\{ \frac{\min\{|Te_n - U e_n|, 2\}}{n} \right\}.
  \end{eqnarray}  
Note that with this metric $S^1 $ acts by isometries on $\UH$, i.e.
\begin{eqnarray*}
\langle e^{it}T, e^{it}U \rangle= \langle T, U \rangle
\end{eqnarray*}
and moreover we have that
\begin{eqnarray*}% \label{distance on elements acted by S1}
\langle e^{it}T, e^{is}T \rangle= |e^{it}-e^{is}|.
\end{eqnarray*}

From now and on we will assume that $\UH$ is endowed with the strong operator topology
and with the metric defined above. The first 
consequence of this choice is the following lemma.

\begin{lemma} \label{evaluation-adjuncion}
Let $X$ be a topological space and let us consider $\UH$ with the strong operator topology. Then a map $\Psi : X \to \UH$ is continuous if and only if the map
\begin{eqnarray*}
\psi : X \times \HH & \too & \HH \\
(x,h) & \mapstoo & \Psi(x)h
\end{eqnarray*}
is continuous.
\end{lemma}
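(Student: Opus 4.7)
The plan is to prove both implications separately, noting that the nontrivial content lives in the ``$\Rightarrow$'' direction, where we need joint continuity from separate continuity in $X$ plus the uniform norm bound available on $\UH$.

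For the easy direction ($\psi$ continuous $\Rightarrow$ $\Psi$ continuous), I would recall that the strong operator topology on $\UH$ is by definition the topology of pointwise convergence on $\HH$, so $\Psi$ is continuous iff for every fixed $h\in\HH$ the map $x\mapsto \Psi(x)h$ is continuous. But this map is just the composition of the continuous inclusion $X\hookrightarrow X\times\HH$, $x\mapsto(x,h)$, with $\psi$, hence continuous. So this direction is essentially a restatement of the definition.

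For the hard direction ($\Psi$ continuous $\Rightarrow$ $\psi$ continuous), I would fix $(x_0,h_0)\in X\times\HH$ and estimate
\[
 |\Psi(x)h-\Psi(x_0)h_0| \;\leq\; |\Psi(x)(h-h_0)| + |(\Psi(x)-\Psi(x_0))h_0|.
\]
The second term tends to $0$ as $x\to x_0$ precisely because $\Psi$ is continuous into $\UH$ with the strong operator topology. The first term is where the use of $\UH$ (rather than $\BH$) becomes essential: since $\Psi(x)$ is unitary we have $|\Psi(x)(h-h_0)|=|h-h_0|$, which tends to $0$ as $h\to h_0$ in $\HH$ \emph{independently} of $x$. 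This uniform bound is what lets separate continuity upgrade to joint continuity; without unitarity (or some uniform boundedness) the argument would fail, which is why the analogous statement for $\BH$ with the strong operator topology is false in general.

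The only mild subtlety is verifying that both conditions for continuity are correctly interpreted with the topologies at hand: $X\times \HH$ carries the product topology, $\HH$ its norm topology, and $\UH$ the strong operator topology, which by the discussion preceding the lemma coincides with the topology induced by the metric defined in equation~\eqref{metric UH}. Given this, the argument above is essentially a two-line triangle-inequality estimate, so I do not expect any serious obstacle; the whole content of the lemma is the observation that unitarity provides the uniform operator-norm bound needed to cross from separate to joint continuity.
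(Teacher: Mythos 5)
Your proof is correct, and the substantive direction ($\Psi$ continuous $\Rightarrow$ $\psi$ continuous) is exactly the paper's argument: the same triangle-inequality decomposition $|\Psi(x)h-\Psi(x_0)h_0|\leq|\Psi(x)(h-h_0)|+|(\Psi(x)-\Psi(x_0))h_0|$, with unitarity collapsing the first term to $|h-h_0|$ and strong continuity of $\Psi$ controlling the second. The only divergence is in the easy direction: the paper deduces it by citing Munkres (Thm.\ 46.11, that a continuous $X\times\HH\to\HH$ induces a continuous map into the mapping space with the compact-open topology) together with the fact that the compact-open and strong operator topologies coincide on $\UH$, whereas you argue directly that continuity into the strong operator topology is, by definition of its subbasis, equivalent to continuity of $x\mapsto\Psi(x)h$ for each fixed $h$, and each such map factors through $\psi$. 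Your route is more elementary and self-contained, avoiding any appeal to the coincidence of the two topologies on $\UH$; the paper's route is shorter on the page but leans on that coincidence and an external theorem. Both are valid, and your closing remark correctly identifies unitarity (uniform boundedness) as the reason the statement fails for $\BH$ with the strong operator topology.
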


\begin{proof}
Consider $(x_0, h_0) \in X\times \HH$ and let $$B(\Psi(x_0)h_0,\varepsilon)$$ be an open ball in 
$\HH$ with center at $\psi(x_0,h_0)=\Psi(x_0)h_0$ and radius $\varepsilon>0$. We will show
that there exists an open set $A\subset X \times \HH$ such that $\psi(A)\subset B(\Psi(x_0)h_0,\varepsilon)$.

Define $ V(\Psi(x_0),\varepsilon/2; h_0) = \{T\in \UH \mid Th_0\in B(\Psi(x_0)h_0,\varepsilon/2) \}$ which is an open set in the strong operator topology on $\UH$. Then $U = \Psi^{-1}(V(\Psi(x_0),\varepsilon/2; h_0))$ is an open set in $X$.

Let us see that the open set $$A = U \times B(h_0;\varepsilon/2) \subset X \times \HH$$ has the desired properties. Indeed, for $(x,h)\in A$ we have
\begin{align*}
|\psi  (x, & h) - \psi(x_0,h_0)| \\
 = & |\Psi(x)h - \Psi(x_0)h_0| \\
 \leq & |\Psi(x)h - \Psi(x)h_0| + |\Psi(x)h_0 - \Psi(x_0)h_0| \\
 = & |h-h_0| + |\Psi(x)h_0 - \Psi(x_0)h_0| <  \frac{\varepsilon}{2} + \frac{\varepsilon}{2},
\end{align*}
and therefore $A\subset \psi^{-1} \left( B(\Psi(x_0)h_0,\varepsilon) \right)$.

By \cite[Thm. 46.11]{Munkres:Top} the reciprocal statement is true whenever $\UH$ is endowed with the compact-open topology. Since the strong operator topology and the compact-open
 topology agree on $\UH$ the lemma follows.

\end{proof}

Recall that an orthogonal projector $P$ on the Hilbert space consist of an operator $P: \HH \to \HH$ such that $P^2=P$ with the property that $\ker(P)$ and $\im(P)$ are orthogonal. Define the \textit{infinite grassmannian} as follows
\begin{align*}
\sg  :=   \{P \in  \mathcal{B}(\HH) & \mid P^2=P,  \ker(P)= \im(P)^\perp, \\   & \dim(\ker(P)) = \dim(  \im(P))= \infty \}\end{align*}
and endow it with the strong operator topology.

Since the map
$$ \sg \to \UH, \ \ P \mapsto 2P-1$$
is an embedding, and $\UH$ is a Polish group, then $\sg$ is metrizable. Moreover, since $\sg$ is closed in $\UH$
we have that the infinite grassmannian $\sg$ is a completely metrizable space.

\section{Continuous fields of Hilbert spaces} \label{Chapter continuous fields}

The following is the definition of Dixmier and Douady \cite{DixDou} applied to the case of Hilbert spaces.

Consider $B$ a topological space  and denote by $\mathcal{O}(B)$ the algebra of continuous complex valued functions
on $B$. Let $(E(z))_{z \in B}$ be a family of Hilbert spaces. For $Y \subset B$, an element in $\prod_{z \in Y}E(z)$, i.e. 
an assignment  $s$ defined on $Y$ such that $s(z) \in E(z)$ for all $z \in Y$, will be called a {\it vector field} over $Y$.  For $s$ a vector field
over $Y$, we will denote by $\|s\|$ the map $z \mapsto \|s(z)\|$ taking values in $\mathbb{R}$.

\begin{definition} \cite[Def. 1, pp 228]{DixDou}
A {\it continuous field of Hilbert spaces} $\mathscr{E}$ over the topological space $B$ is a family $(E(z))_{z \in B}$ 
of Hilbert spaces, endowed with a set $\Gamma \subset \prod_{z \in B}E(z)$ of vector fields, such that:

\begin{itemize}
\item $\Gamma$ is a sub-$\mathcal{O}(B)$-module of $ \prod_{z \in B}E(z)$.
\item For all $z \in B$ and all $\xi \in E(z)$, there exists $s \in \Gamma$ such that $s(z)= \xi$.
\item For all $s \in \Gamma$, the map $\|s\|$ is continuous.
\item If $s \in  \prod_{z \in B}E(z)$ is a vector field such that for all $z \in B$ and every $\varepsilon >0$ there exists $s' \in \Gamma$
 satisfying $\|s - s'\| \leq \varepsilon$ on a  neighborhood of $z$, then $s \in \Gamma$.
\end{itemize}
\end{definition}

The elements of $\Gamma$ will be called {\it continuous vector fields} of $\mathscr{E}$.

Let $\HH$ be a Hilbert space and $\Gamma$ the space of continuous maps from $B$ to $\HH$. For every
$z \in B$ define $E(z):= \HH$. Then $\mathscr{E}= ((E(z))_{z \in B}, \Gamma)$ is a continuous 
field of Hilbert spaces and will be called the {\it constant field} over $B$ defined by $\HH$.

Following \cite[\S12]{DixDou}, denote by $ \mathscr{D}_0$ the constant field over $\sg$ defined by $\HH$ and denote by $\mathscr{D}$
the {\it canonical field} over $\sg$ defined by the family of vector spaces $(P(\HH))_{P \in \sg}$, where $\Gamma$ is the
set of vector fields parametrized by the elements in $\HH$
$$\Gamma = \{ \bar{\xi} \in \prod_{P \in \sg} P(\HH) \mid \xi \in \HH \}$$
with $ \bar{\xi}(P):=P(\xi)$. 
Denote by $\mathscr{D}^\perp$ the family of vector spaces $(P(\HH)^\perp)_{P \in \sg}$ and note that
both $\mathscr{D}$ and $\mathscr{D}^\perp$ are sub-fields of $\mathscr{D}_0$ and moreover $\mathscr{D} \oplus \mathscr{D}^\perp \cong \mathscr{D}_0$.

From Theorem 2 in \cite[\S12]{DixDou}, it follows that the canonical field over the infinite grassmannian is trivial if $\sg$ has the norm topology.
Nevertheless whenever $\sg$ is endowed with the strong operator topology, then the canonical field is not locally trivial \cite[\S16, Cor. 2]{DixDou}.

The continuous field $\mathscr{D}$ over $\sg$ is a universal continuous field for 
continuous fields of infinite dimensional Hilbert spaces over paracompact spaces. This fact follows from \cite[\S14, Cor. 1]{DixDou}
which we quote here: Let $\mathscr{E}$ be a continuous field of infinite dimensional and separable Hilbert spaces
over the paracompact space $B$. Then there exist a continuous map $\phi : B \to \sg$ such that $\mathscr{E}\cong \phi^*\mathscr{D}$.

Dixmier and Douady show this fact in \cite[\S13, Thm. 3]{DixDou} by constructing vector fields $\{ \bar{s}_n \}_{n \in \mathbb{N}} \subset \Gamma$
which are orthogonal over $\phi(B)$ and such that for all $P \in \phi(B)$ the set
$\{ \bar{s}_n(P) \}_{n \in \mathbb{N}}$ is an orthonormal base for $P(\HH)$. With these sections
at hand Dixmier and Douady furthermore show in \cite[\S15, Thm. 5]{DixDou}  that $\mathscr{E}$
 is trivializable.
 
 \begin{lemma} \label{lemma:sections of UH on paracompact B}
Let $B$ be a paracompact space of finite covering dimension and $\phi : B \to \sg$ a continuous map. Take $b_0 \in B$ and define the
conjugation map 
\begin{align*}
\pi_{\phi(b_0)} :\UH &  \too \sg \\
F & \mapstoo  F\phi(b_0)F^{-1}.
\end{align*}
Then there exist a continuous map $\sigma : B\to \UH$ such that the following diagram is commutative
$$\xymatrix{
&& \UH \ar[d]^-{\pi_{\phi(b_0)}} \\
B \ar[rru]^-{\sigma} \ar[rr]^-{\phi} && \sg.
}$$
\end{lemma}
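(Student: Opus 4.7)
The plan is to exploit the Dixmier--Douady trivialization of the pullback continuous field $\phi^*\mathscr{D}$. Because $B$ is paracompact, the cited results [$\S 13$, Thm.~3 and $\S 15$, Thm.~5] of \cite{DixDou} produce a sequence of continuous vector fields $\{\bar{s}_n\}_{n \in \mathbb{N}}$ with the property that $\{\bar{s}_n(b)\}_n$ is an orthonormal basis of $\phi(b)(\HH)$ for every $b \in B$. Applying the same statement to the orthogonal complement field $\phi^*\mathscr{D}^\perp$, which is equally well a continuous field of separable infinite-dimensional Hilbert spaces, yields $\{\bar{t}_n\}_{n \in \mathbb{N}}$ with $\bar{t}_n(b) \in \phi(b)(\HH)^\perp$ and $\{\bar{t}_n(b)\}_n$ an orthonormal basis of the complement at each fibre. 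Evaluating at $b_0$ gives in particular an orthonormal basis $\{\bar{s}_n(b_0)\} \cup \{\bar{t}_n(b_0)\}$ of $\HH$.

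Next, define $\sigma(b) \in \UH$ by linearly extending
$$
\sigma(b)\bigl(\bar{s}_n(b_0)\bigr) = \bar{s}_n(b), \qquad \sigma(b)\bigl(\bar{t}_n(b_0)\bigr) = \bar{t}_n(b);
$$
this sends one orthonormal basis of $\HH$ to another, so $\sigma(b)$ is unitary. A direct check on basis vectors shows $\sigma(b)\phi(b_0)\sigma(b)^{-1}=\phi(b)$: the operator $\phi(b_0)$ is the identity on $\mathrm{span}\{\bar{s}_n(b_0)\}$ and vanishes on $\mathrm{span}\{\bar{t}_n(b_0)\}$, hence $\sigma(b)\phi(b_0)\sigma(b)^{-1}$ is the identity on $\mathrm{span}\{\bar{s}_n(b)\}$ and zero on $\mathrm{span}\{\bar{t}_n(b)\}$, which is precisely $\phi(b)$.

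The substantive step is the continuity of $\sigma:B \to \UH$. By Lemma \ref{evaluation-adjuncion} it suffices to prove joint continuity of $(b,h)\mapsto \sigma(b)h$. For a fixed $h=\sum_n \alpha_n\bar{s}_n(b_0)+\sum_n \beta_n\bar{t}_n(b_0)$ with $\sum|\alpha_n|^2+\sum|\beta_n|^2=\|h\|^2<\infty$, one has $\sigma(b)h=\sum_n \alpha_n\bar{s}_n(b)+\sum_n \beta_n\bar{t}_n(b)$; since each $\bar{s}_n, \bar{t}_n$ is a norm-continuous map into $\HH$ and the Parseval tail controls the series uniformly in $b$, the partial sums converge uniformly, so $b\mapsto \sigma(b)h$ is continuous. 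Because every $\sigma(b)$ is an isometry, the estimate $\|\sigma(b)h-\sigma(b')h'\|\le \|h-h'\|+\|\sigma(b)h'-\sigma(b')h'\|$ upgrades this to joint continuity on $B\times\HH$. The real obstacle is the first step: securing the continuous orthonormal frames $\{\bar{s}_n\}$ and $\{\bar{t}_n\}$ simultaneously on all of $B$, which is exactly where paracompactness and finite covering dimension enter through the Dixmier--Douady inductive construction. Once those frames are invoked as a black box, the rest is linear algebra in each fibre together with the strong-topology continuity check afforded by Lemma \ref{evaluation-adjuncion}.
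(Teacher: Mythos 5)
Your proposal is correct and follows essentially the same route as the paper: invoke the Dixmier--Douady trivialization of $\phi^*\D$ and $\phi^*\D^\perp$ over the paracompact base to get pointwise orthonormal frames $\{\bar{s}_n\}$, $\{\bar{t}_n\}$, define $\sigma(b)$ by matching the frame at $b_0$ to the frame at $b$, and deduce continuity from Lemma \ref{evaluation-adjuncion}. The only differences are cosmetic (the paper cites \cite[\S14, Thm.~4]{DixDou} for the frames, and you spell out the joint-continuity estimate that the paper leaves implicit).
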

\begin{proof}
Since $\phi^*\D \oplus \phi^*\D^\perp \cong B \times \HH$ and  $\phi^*\D$ is trivializable, in \cite[\S14, Thm. 4]{DixDou} it is shown that there exist sections $\{ \bar{s}_n \}_{n \in \mathbb{N}}$,
$\{ \bar{t}_n \}_{n \in \mathbb{N}}$ for $\phi^*\D$ and $\phi^*\D^\perp$ respectively
which are pointwise orthonormal, i.e. $\{ \bar{s}_n(b) \}_{n \in \mathbb{N}}$ is an orthonormal base for
$\phi(b)(\HH)$ and $\{ \bar{t}_n(b) \}_{n \in \mathbb{N}}$ is an orthonormal base for
$({\rm{Id}}-\phi(b))(\HH)$. 

Define the map $\psi : B \times \HH \to \HH$ by the assignment 
$$\psi(b, \bar{s}_n(b_0)) = \bar{s}_n(b) \ \ \mbox{and} \ \ 
\psi(b, \bar{t}_n(b_0)) = \bar{t}_n(b)$$
and note that $\psi$ is continuous. By Lemma \ref{evaluation-adjuncion} the map $\sigma: B \to \UH$
defined by the equation $\psi(b,h)=\sigma(b)h$ is continuous  and we have that
$$\sigma(b) \bar{s}_n(b_0)=\bar{s}_n(b) \ \ \mbox{and} \ \
\sigma(b) \bar{t}_n(b_0)=\bar{t}_n(b).$$
Therefore \begin{align*}
\sigma(b) \phi(b_0) \left( \bar{s}_n(b_0)\right) = & \sigma(b)  \left( \bar{s}_n(b_0)\right) \\
=&    \bar{s}_n(b) \\
=& \phi(b)( \bar{s}_n(b)) \\
=&  \phi(b) \sigma(b) (\bar{s}_n(b_0))
\end{align*}
and $\sigma(b) \phi(b_0) \left( \bar{t}_n(b_0)\right) =0 = \phi(b) \sigma(b) \left( \bar{t}_n(b_0)\right)$. Therefore $\sigma(b) \phi(b_0)= \phi(b) \sigma(b)$ and
we have the desired equation $\sigma(b) \phi(b_0) \sigma(b)^{-1}= \phi(b)$.
\end{proof}

We have just shown that over paracompact spaces we may construct sections of the conjugation
map. But these sections fail to exist whenever the base is the infinite grassmannian.

\begin{lemma}\label{no global sections for grassmannian}
Let $Q\in \sg$ be a projector. Then, the map 
\begin{eqnarray*}
\pi_Q:\UH & \too & \sg \\
F & \mapstoo & FQF^{-1}
\end{eqnarray*}
has no global sections.
\end{lemma}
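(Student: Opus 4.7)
The plan is to argue by contradiction using the fact, cited earlier from \cite[\S16, Cor. 2]{DixDou}, that the canonical continuous field $\mathscr{D}$ over $\sg$ with the strong operator topology fails to be locally trivial. A global section of $\pi_Q$ would produce a global trivialization of $\mathscr{D}$, contradicting this.

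Concretely, suppose $\sigma : \sg \to \UH$ is a continuous section of $\pi_Q$, so that $\sigma(P)\,Q\,\sigma(P)^{-1} = P$ for every $P \in \sg$. Pick an orthonormal basis $\{e_n\}_{n \in \mathbb{N}}$ of $Q(\HH)$ and define, for each $n$, the vector field
\begin{equation*}
\bar{s}_n : \sg \too \HH, \qquad \bar{s}_n(P) := \sigma(P)\,e_n.
\end{equation*}
By Lemma \ref{evaluation-adjuncion} (applied to the continuous map $\sigma$, since $\UH$ carries the strong operator topology) each $\bar{s}_n$ is continuous as a map $\sg \to \HH$. The identity $P\,\sigma(P) = \sigma(P)\,Q$, together with $Q e_n = e_n$, gives $P\,\bar{s}_n(P) = \bar{s}_n(P)$, so $\bar{s}_n(P) \in P(\HH)$ and the $\bar{s}_n$ are continuous sections of $\mathscr{D}$.

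Because $\sigma(P)$ is unitary and carries $Q(\HH)$ onto $P(\HH)$ (this is exactly the content of the conjugation relation), the collection $\{\bar{s}_n(P)\}_{n \in \mathbb{N}}$ is an orthonormal basis of $P(\HH)$ at every $P \in \sg$. The continuous map $\sg \times \HH \to \HH$ determined by sending $e_n$ to $\bar{s}_n(P)$ then provides a continuous global isomorphism between $\mathscr{D}$ and the constant Hilbert field $\sg \times Q(\HH)$, as in the construction used in Lemma \ref{lemma:sections of UH on paracompact B}. This in particular trivializes $\mathscr{D}$ on every open subset of $\sg$, contradicting the fact that $\mathscr{D}$ is not locally trivial.

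The only substantive point one has to be careful about is the continuity step: one must verify that the pointwise-orthonormal sections $\bar{s}_n$ really are continuous as $\HH$-valued maps, which is where the strong operator topology on $\UH$ (through Lemma \ref{evaluation-adjuncion}) is used in an essential way. Everything else is a direct consequence of the conjugation relation $\sigma(P)\,Q\,\sigma(P)^{-1} = P$ and the cited non-triviality result of Dixmier--Douady.
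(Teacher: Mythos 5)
Your proposal is correct and follows essentially the same route as the paper: assume a global section $\sigma$, use Lemma \ref{evaluation-adjuncion} to get continuity of $(P,h)\mapsto\sigma(P)h$, and observe that restricting to $\sg\times\im(Q)$ trivializes the canonical field $\mathscr{D}$, contradicting the Dixmier--Douady result that $\mathscr{D}$ is nowhere locally trivial. The paper states the trivialization directly as the map $(P,h)\mapsto(P,\sigma(P)h)$ without passing through the orthonormal sections $\bar{s}_n$, but this is only a difference of presentation.
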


\begin{proof}
Let us proceed by contradiction.
Suppose that there exists a continuous map $\sigma : \sg \to \UH$ such that $\sigma(P)Q\sigma(P)^{-1} = P$. By Lemma \ref{evaluation-adjuncion} we know that the evaluation map $(P, h) \mapsto \sigma(P)h $ is continuous. Hence the map $(P, h) \mapsto (P, \sigma(P)h)$ is also continuous, and its restriction
\begin{eqnarray*}
\sg \times \im(Q) &\too& \D  \\
(P, h) & \mapstoo & (P, \sigma(P)h)
\end{eqnarray*}
is continuous. This map trivializes the canonical field $\D$ over $\sg$ but this contradicts \cite[\S16, Cor. 2]{DixDou} where it is shown that $\D$ is nowhere locally trivial. 
\end{proof}

The map $\pi_Q$ is surjective in $\sg$ since any two orthogonal projectors in $\sg$ are conjugate. Therefore
the map $\pi_Q$ induces a continuous map
$$\UH/ \UH_Q \too \sg, \ \ \ [F] \mapsto FQF^{-1}$$
where $\UH_Q=\{T \in \UH \mid TQT^{-1}=Q \}$ is the isotropy group of $Q$, which is moreover bijective 
but which is not a homeomorphism. This last statement follows from \cite[\S16, Cor. 2]{DixDou}
where it is shown that $\D$ is nowhere locally trivial thus implying that the map $\pi_Q$ is not
a $\UH_Q$-principal bundle over $\sg$.

Nevertheless, the existence of extensions that were shown in Lemma \ref{lemma:sections of UH on paracompact B}
implies that the pullback $\phi^*\UH$ over $B$ of a fixed  map $\phi: B \to \sg$ is indeed a $\UH_{\phi(b_0)}$-principal bundle. If $\phi^*\UH= \{(b, F) \in B \times \UH \mid \phi(b)=\pi_{\phi(b_0)}(F) \}$
then the map
$$B \times \UH_{\phi(b_0)} \too \phi^*\UH, \ \ \ (b,T) \mapsto (b,\sigma(b)T)$$
is a $\UH_{\phi(b_0)}$-bundle isomorphism.

We conclude this section by stating that $\sg$ is a classifying space for Hilbert space bundles
over paracompact spaces. This follows from the following three facts: First, any continuous field of infinite dimensional Hilbert
spaces over a paracompact space $B$ is isomorphic to the pullback over some map
 of the canonical field $\D$ over $\sg$. Second, any continuous field of infinite dimensional
 Hilbert spaces over a paracompact space is trivial. Third, the infinite grassmannian $\sg$ is contractible.
 Hence any two maps from $B$ to $\sg$ are homotopic, and any two continuous fields of infinite
 dimensional spaces over $B$ are isomorphic.

\section{\small Spaces of unitary representations} \label{Chapter unitary representations}

Let $G$ be a compact Lie group and consider a continuous homomorphism $\alpha: G \to \UH$. 
The homomorphism $\alpha$ induces the structure of a $G$ representation to $\HH$ denoted by $\HH_\alpha$ and
we have a canonical decomposition of $\HH_\alpha$ in isotypical components $$\HH \cong \bigoplus_{V \in \Irrep(G)}\HH_\alpha^V$$
where $\Irrep(G)$ denotes the isomorphism classes of irreducible representations of $G$, $V$
is a representative of its isomorphism class of irreducible representation and 
$\HH_\alpha^V$
is the isotypical subspace associated to $V$.  By Schur's Lemma, the isotypical part associated to $V$
may be defined as the image of the evaluation map, i.e.
\begin{align}\label{definition of evaluation map}
\HH_\alpha^V= \im\left(ev: V \otimes \hom_G(V, \HH_\alpha) \to \HH_\alpha \right).\end{align}
with $ev(v \otimes f)=f(v)$.
\begin{definition}
Let $\alpha: G \to \UH$ be a continuous homomorphism from a compact Lie group $G$ to $\UH$. We say that
the homomorphism is {\it{stable}} if all the isotypical components of $\HH_\alpha$ are either
infinite dimensional or zero dimensional. We will denote the set of stable homomorphisms as $\homst{G}{ \UH}$.
\end{definition}

The set of all homomorphisms $\hom(G,\UH)$ can be endowed with the subspace topology
of the compact-open topology of the space $$\map(G, \UH)$$ of continuous maps from $G$ to $\UH$, and a subbase for this topology is given by the family of sets
\begin{align*}
&\mathcal V(a, \varepsilon; x) := \\ &\{ \gamma \in \hom (G, \UH) \mid \|\gamma(g) - a(g)x \| <\varepsilon, \ g\in G\}\end{align*}
for $a\in \hom(G,\UH)$, $\varepsilon>0$ and $x \in \HH$.

On the other hand, since $G$ is compact and $\UH$ is metrizable, we may also endow  $$\map(G, \UH)$$ with the supremum metric, i.e. 
for $$\alpha, \beta \in \map(G, \UH)$$ we define $$d_{sup}(\alpha,\beta)= \sup\{d(\alpha(g),\beta(g)) \mid g \in G \}.$$
By \cite[Thm. 46.8]{Munkres:Top} we know that these two topologies agree. Moreover,  
since $\UH$ is complete  we know that
$\map(G, \UH)$ is also complete \cite[Thm. 43.5]{Munkres:Top}.

\begin{lemma}
The space $\hom(G,\UH)$ is a complete metric space.
\end{lemma}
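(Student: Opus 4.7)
The plan is to realize $\hom(G,\UH)$ as a closed subspace of the already known complete metric space $\map(G,\UH)$ equipped with $d_{\sup}$, and then invoke the standard fact that closed subsets of complete metric spaces are complete.

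First I would recall the setup: by the discussion immediately preceding the statement, $\map(G,\UH)$ with the supremum metric $d_{\sup}$ is a complete metric space, since $G$ is compact and $\UH$ is complete in its strong-operator metric (equation \eqref{metric UH}). So it suffices to prove that $\hom(G,\UH)$ is closed in $\map(G,\UH)$, and then the restriction of $d_{\sup}$ gives the desired complete metric.

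To verify closedness, I would take a sequence $\{\alpha_n\}\subset \hom(G,\UH)$ converging uniformly to some $\alpha\in\map(G,\UH)$, and check that $\alpha$ is again a homomorphism. Fix $g,h\in G$. Uniform convergence gives $\alpha_n(g)\to\alpha(g)$, $\alpha_n(h)\to\alpha(h)$ and $\alpha_n(gh)\to\alpha(gh)$ in $\UH$. Since $\UH$ endowed with the strong operator topology is a topological group (as recalled in Section \ref{Chapter operators}, being a Polish group), multiplication is jointly continuous on $\UH$, so $\alpha_n(g)\alpha_n(h)\to\alpha(g)\alpha(h)$. Passing to the limit in the identity $\alpha_n(gh)=\alpha_n(g)\alpha_n(h)$ yields $\alpha(gh)=\alpha(g)\alpha(h)$, so $\alpha\in\hom(G,\UH)$. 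Continuity of $\alpha$ is automatic because $\alpha\in\map(G,\UH)$ by assumption.

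The only real point that needs care is the joint continuity of multiplication on $\UH$ in the strong operator topology; this is where one could slip if one only had separate continuity (as on all of $\BH$). I would therefore explicitly note that the restriction to the unitary group is jointly continuous, which is standard and already used implicitly in the preceding sections. Everything else is routine: closedness in a complete metric space gives completeness, and the metric on $\hom(G,\UH)$ is simply the restriction of $d_{\sup}$.
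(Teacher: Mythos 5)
Your proposal is correct and follows essentially the same route as the paper: both realize $\hom(G,\UH)$ as a closed subspace of the complete space $\map(G,\UH)$ by passing to the limit in $\alpha_n(gh)=\alpha_n(g)\alpha_n(h)$. You are in fact slightly more careful than the paper, which asserts $\alpha_n(g)\alpha_n(h)\to\alpha(g)\alpha(h)$ without flagging that this rests on joint continuity of multiplication on $\UH$ in the strong operator topology.
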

\begin{proof}
Since $\map(G,\UH)$ is complete we may take a convergent sequence $\{\alpha_n\}_{n \in \mathbb N}$
of homomorphisms which converge to $\alpha \in \map(G, \UH)$. We know that
$$\alpha_n(g)\alpha_n(h) \to \alpha(g)\alpha(h)$$ and $\alpha_n(gh) \to \alpha(gh)$, and since
 $\alpha_n(g)\alpha_n(h)=\alpha_n(gh)$ we conclude that $$\alpha(g)\alpha(h)=\alpha(gh).$$
 Therefore $\alpha$ is also a homomorphism and hence $\hom(G,\UH)$ is complete.
\end{proof}

\begin{lemma} The space of stable homomorphisms $\homst{G}{\UH}$ is not closed in $\hom(G,\UH)$.
\end{lemma}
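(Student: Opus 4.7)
The plan is to exhibit a sequence of stable homomorphisms whose limit has a finite-dimensional nonzero isotypical component. Assume $G$ is non-trivial (otherwise $\homst{G}{\UH} = \hom(G,\UH)$ and closedness is automatic), and fix a non-trivial finite-dimensional irreducible representation $\rho\colon G \to U(V)$ of dimension $d$. Choose a Hilbert space decomposition
\[ \HH = A \oplus \bigoplus_{k \geq 1} V_k, \]
with $A$ infinite-dimensional and each $V_k$ identified with $V$ by a fixed unitary isomorphism.

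Define $\alpha \in \hom(G, \UH)$ to act via $\rho$ on $V_1$ and trivially on $A \oplus \bigoplus_{k \geq 2} V_k$. Its $V$-isotypical component is $V_1$, of dimension $0 < d < \infty$, so $\alpha \notin \homst{G}{\UH}$. For each $n \geq 2$, define $\alpha_n$ to act via $\rho$ on $V_1 \oplus \bigoplus_{k \geq n} V_k$ and trivially on $A \oplus \bigoplus_{2 \leq k < n} V_k$. Both the $V$-isotypical component $V_1 \oplus \bigoplus_{k \geq n} V_k$ and the trivial-isotypical component $A \oplus \bigoplus_{2 \leq k < n} V_k$ are infinite-dimensional (the latter thanks to $A$), all other isotypical components vanish, hence $\alpha_n \in \homst{G}{\UH}$.

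To verify $\alpha_n \to \alpha$ in $\hom(G, \UH)$, choose an orthonormal basis $\{e_j\}_{j \geq 1}$ of $\HH$ adapted to the decomposition and arranged so that any basis vector lying in $V_k$ occurs at index $j \geq k$ (interleave the basis of $A$ with those of the $V_k$'s). By construction, $\alpha_n(g) e_j = \alpha(g) e_j$ for every $g \in G$ unless $e_j$ lies in $V_k$ with $k \geq n$, in which case $j \geq n$. Applying the metric of (\ref{metric UH}) uniformly in $g$ then gives
\[ d_{\sup}(\alpha_n, \alpha) \;\leq\; \sup_{j \geq n}\frac{2}{j} \;=\; \frac{2}{n} \to 0, \]
so $\alpha_n \to \alpha$ in the compact-open topology and $\homst{G}{\UH}$ is not closed. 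The only delicate point is the interleaving of the basis: without it, disagreements at fixed small indices would persist and destroy convergence; once they are pushed to indices tending to infinity the bound is immediate from the definition of the metric on $\UH$.
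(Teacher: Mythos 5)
Your proof is correct and follows essentially the same strategy as the paper's: exhibit stable homomorphisms that act nontrivially on a subspace shrinking toward a fixed finite-dimensional piece, pushing all disagreements with the limit to basis vectors of ever higher index so that convergence holds in the metric \eqref{metric UH} uniformly in $g$. The paper carries this out explicitly only for $G=S^1$ (with the limit having a one-dimensional isotypical component spanned by $e_1$) and asserts the general case is similar; your argument is that general case, with the interleaving of the basis correctly identified as the point that makes the $2/n$ bound work.
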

\begin{proof}
We will show a convergent sequence in $\homst{S^1}{\UH}$ whose limit is not stable. The argument
for any compact Lie group is similar.

Let $\{e_j\}_{j \in \mathbb N}$ be an orthonormal base of $\HH$. Define the homomorphisms
$\alpha_k : S^1 \to \UH$ by the assignment $$\alpha_k(e^{i\theta})(e_{2j})=e_{2j}$$
and
$$  \alpha_k(e^{i\theta})(e_{2j+1})=  \left\{
 \begin{array}{ccl}
 e_{2j+1} & \mbox{if} & 0< j <k\\
 e^{i \theta} e_{2j+1} & \mbox{if} & j \geq k \ \mbox{or} \ j=0
 \end{array} \right.$$
and note that the $\alpha_k$'s are stable. Since $$\lim_{k \to \infty} \alpha_k(e^{i \theta})e_j = e_j $$ for
$j>1$ and $$\lim_{k \to \infty} \alpha_k(e^{i \theta})e_1 = e^{i \theta}e_1$$
we know that $\lim_{k \to \infty} \alpha_k $ does not belong to the space of stable homomorphisms.
\end{proof}

\begin{lemma} The space $\homst{G}{\UH}$ of stable homomorphisms is not open in $\hom(G,\UH)$.
\end{lemma}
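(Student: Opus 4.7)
The plan is to mirror the construction used in the preceding non-closedness lemma, but with the roles of stable and non-stable homomorphisms exchanged: exhibit a stable $\alpha$ together with a sequence $\beta_k$ of non-stable homomorphisms converging to it. Once such an $\alpha$ is produced, every open metric ball around it meets the complement of $\homst{G}{\UH}$, so stability fails to be an open condition.

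I would first carry out the argument for $G=S^1$. Take $\alpha(e^{i\theta})=\Id_{\HH}$, the trivial homomorphism, which is stable since the whole of $\HH$ is the trivial isotypical component. For each $k \in \mathbb N$, define $\beta_k(e^{i\theta})e_n=e_n$ for $n \le k$ and $\beta_k(e^{i\theta})e_n=e^{i\theta}e_n$ for $n>k$. Each $\beta_k$ is a continuous homomorphism into $\UH$ whose trivial isotypical component has dimension exactly $k$, which is finite and nonzero, so $\beta_k \notin \homst{S^1}{\UH}$.

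The crux is then the distance estimate in the supremum metric, using the formula \eqref{metric UH}. For $n \le k$ the term vanishes, and for $n>k$ one has $|\beta_k(e^{i\theta})e_n-\alpha(e^{i\theta})e_n|=|1-e^{i\theta}|\le 2$, so
$$\langle \alpha(e^{i\theta}),\beta_k(e^{i\theta})\rangle \;\le\; \sup_{n>k}\frac{|1-e^{i\theta}|}{n}=\frac{|1-e^{i\theta}|}{k+1}.$$
Taking the supremum over $\theta$ gives $d_{sup}(\alpha,\beta_k)\le 2/(k+1) \to 0$, so $\beta_k \to \alpha$ even though $\alpha$ is stable and none of the $\beta_k$ are. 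This already proves the lemma at $\alpha$ in the case $G=S^1$.

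For a general non-trivial compact Lie group $G$, I would select any non-trivial irreducible representation $V$ (available by Peter--Weyl), partition $\{e_n\}_{n\in\mathbb N}$ into consecutive blocks of length $\dim V$, and define $\beta_k$ to act trivially on the first $k$ blocks and as $V$ on each of the remaining blocks, keeping $\alpha$ the trivial homomorphism. The same weighted-metric estimate then yields $\beta_k\to\alpha$ with $\alpha$ stable and each $\beta_k$ containing the trivial isotypical component with finite nonzero multiplicity $k\cdot \dim V$. The only thing to check carefully is the metric estimate, but the weight $1/n$ in \eqref{metric UH} is precisely designed so that perturbations concentrated at large indices vanish in the limit, so there is no real obstacle; the construction works uniformly for any non-trivial $G$.
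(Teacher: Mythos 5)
Your proposal is correct, and it establishes the lemma by a route that differs from the paper's in an instructive way. The paper argues pointwise at \emph{every} stable homomorphism: given a stable $a$ and a basic open set $\mathcal V(a,\varepsilon;x_1,\ldots,x_n)$ of the compact-open topology, it builds a non-stable $b$ that agrees with $a$ on a finite-dimensional $a$-invariant subspace containing the $x_k$ and is trivial on the orthogonal complement, so that (when completed) the argument shows $\homst{G}{\UH}$ has empty interior. You instead pick one explicit stable point, the trivial homomorphism $\alpha$, and produce a sequence of non-stable $\beta_k$ converging to it in the supremum metric, mirroring (with the roles of stable and non-stable exchanged) the paper's own proof of the preceding non-closedness lemma. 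This proves strictly less --- non-openness at a single point --- but that is all the statement requires, and your version has the advantage of being fully explicit: the isotypical components of $\alpha$ and $\beta_k$ are visible by inspection, the estimate $d_{sup}(\alpha,\beta_k)\le 2/(k+1)$ follows directly from the weighted metric \eqref{metric UH}, and the block construction for general non-trivial $G$ via a non-trivial irreducible $V$ goes through verbatim (the trivial isotypical component of $\beta_k$ has finite nonzero dimension $k\cdot\dim V$, so $\beta_k$ is not stable). One could even note that your construction, translated at an arbitrary stable $\alpha$ by replacing the tail action with a suitable irreducible not already forced to appear infinitely often, recovers the paper's stronger empty-interior conclusion; but as written your argument is complete and correct for the lemma as stated.
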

\begin{proof}
We will prove that for any basic open set $\mathcal V \subset \hom (G, \UH)$ and $a\in \mathcal{V}$ a stable homomorphism, there exist a non stable homomorphism $b \in \mathcal V$.

Fix $\varepsilon>0$ and $x_1,\ldots,x_n \in \HH$. Define
\begin{align*}
\mathcal V   = & \mathcal V(a, \varepsilon; x_1,\ldots,x_n) \\
  =& \{ \gamma \in \hom (G, \UH) \mid & \Vert \gamma(g)x_k - a(g)x_k \Vert <\varepsilon, \\
   & & \forall g\in G, \forall k=1,\ldots, n \} 
\end{align*}
and let $ H= V_1 \oplus \cdots \oplus V_n$ be the direct sum of the irreducible representations $V_k$ of $G$, such that $x_k \in V_k$ for each $k=1, \ldots, n$. It follows that $H$ is $a$-invariant and finite dimensional.

Let $b : G \to \UH$ be given by
$$b(g)x = 
\begin{cases}
a(g)x & x\in H, \\
x & x\in \HH \ominus H.
\end{cases}
$$
Then $b \in \mathcal V$ by construction, but $a$ and $b$ are not unitary equivalent, i.e. $b$ is not stable.
\end{proof}

\begin{lemma}
The space $\homst{G}{\UH}$ of stable homomorphisms is dense in $\hom(G,\UH)$.
\end{lemma}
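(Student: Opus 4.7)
Fix $\alpha \in \hom(G,\UH)$ and a basic neighborhood $\mathcal V(\alpha,\varepsilon;x_1,\ldots,x_n)$. My plan is to produce a stable homomorphism $\beta$ inside this neighborhood by keeping $\alpha$ intact on a finite dimensional $\alpha$-invariant subspace that almost contains the $x_k$'s, and by replacing $\alpha$ on the orthogonal complement by a ``universal'' stable representation; then every isotypical component of $\HH_\beta$ will be infinite dimensional, while $\alpha$ and $\beta$ will only differ on small vectors.

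First I would use the isotypical decomposition $\HH=\bigoplus_{V\in\Irrep(G)}\HH_\alpha^V$ together with the identification $\HH_\alpha^V\cong V\otimes\hom_G(V,\HH_\alpha)$ coming from~(\ref{definition of evaluation map}) to produce a \emph{finite dimensional} $\alpha$-invariant subspace $W\subset\HH$ whose orthogonal projection $P_W$ satisfies $\|x_k-P_Wx_k\|<\varepsilon/2$ for every $k$. Concretely, since $\sum_V\|x_k^V\|^2<\infty$, I first truncate each $x_k$ to its components in a finite set $F\subset\Irrep(G)$, and then inside each $\HH_\alpha^V\cong V\otimes\hom_G(V,\HH_\alpha)$ I truncate $x_k^V$ along an orthonormal basis of the multiplicity space $\hom_G(V,\HH_\alpha)$; the finite sum of irreducible $G$-submodules picked out this way is the desired $W$.

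Next, fix once and for all a separable infinite dimensional Hilbert space $\mathcal K$ and a homomorphism $\rho:G\to U(\mathcal K)$ whose representation contains infinitely many copies of every $V\in\Irrep(G)$; such a $\rho$ exists because $\Irrep(G)$ is countable for $G$ a compact Lie group. Choose any unitary identification $W^\perp\cong\mathcal K$ and define
$$\beta(g):=\alpha(g)|_W\oplus\rho(g)$$
under $\HH=W\oplus W^\perp$. Then $\beta$ is continuous because each summand is, it is a group homomorphism because $W$ is $\alpha$-invariant, and by construction every isotypical component of $\HH_\beta$ is infinite dimensional, so $\beta\in\homst{G}{\UH}$.

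Finally, write $x_k=x_k'+x_k''$ with $x_k'\in W$ and $x_k''\in W^\perp$; since $\beta(g)x_k'=\alpha(g)x_k'$ one has
$$\|\beta(g)x_k-\alpha(g)x_k\|\leq\|\beta(g)x_k''\|+\|\alpha(g)x_k''\|=2\|x_k''\|<\varepsilon,$$
so $\beta\in\mathcal V(\alpha,\varepsilon;x_1,\ldots,x_n)$ and density follows. I expect the main obstacle to be the first step: an isotypical component $\HH_\alpha^V$ need not be finite dimensional, so one cannot simply take $W$ to be a finite direct sum of isotypical components of $\HH$. An honest truncation inside the multiplicity spaces $\hom_G(V,\HH_\alpha)$ is required, which is where Schur's lemma and the decomposition $\HH_\alpha^V\cong V\otimes\hom_G(V,\HH_\alpha)$ do the real work.
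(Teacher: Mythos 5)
Your proof is correct and follows essentially the same strategy as the paper's: keep $\alpha$ on a finite dimensional $\alpha$-invariant subspace and replace it on the orthogonal complement by a stable representation (the paper uses the regular representation on $L^2(G)\otimes L^2([0,1])$ transported by an isometric isomorphism onto $\HH\ominus H$). The one point where you are more careful than the paper is the truncation step: the paper simply takes $H=V_1\oplus\cdots\oplus V_n$ with $x_k\in V_k$ irreducible, which is not available for an arbitrary $x_k\in\HH$, whereas your approximation $\|x_k-P_Wx_k\|<\varepsilon/2$ obtained by truncating inside the multiplicity spaces $\hom_G(V,\HH_\alpha)$ handles this correctly, at the harmless cost of the factor $2\|x_k''\|$ in the final estimate.
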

\begin{proof}

Let $\alpha \in \hom (G, \UH)$ be a homomorphism and let $\mathcal V(\alpha, \varepsilon; x_1,\ldots,x_n)$ be a basic open set based at $\alpha \in \mathcal{V}$. Consider the finite dimensional and $\alpha$-invariant space $ H = V_1 \oplus \cdots \oplus V_n$ given by the direct sum of the irreducible representations $V_k$ of $G$, such that $x_k \in V_k$ for each $k = 1, \ldots, n$. If $\psi$ is an isometric isomorphism $\psi: L^2(G)\otimes L^2([0,1])\to \HH \ominus H$, then 
$$b(g)x = 
\begin{cases}
\alpha (g)x & x\in H\\
\psi \alpha(g) \psi^{-1}x & x\in \HH \ominus H
\end{cases}
$$
is a stable homomorphism and $b \in \mathcal{V}$.
\end{proof}

\begin{definition} 
Let $\CC \subset \Irrep(G)$ be a choice of irreducible representations of the group $G$. Define
\begin{align*}
\hom(G,\UH)_\CC:= & \\
 \{ \alpha \in \hom(G,\UH) & \mid\dim(\HH_\alpha^V)=0 \ \mbox{for} \ V \notin \CC \}\end{align*} to be the space of homomorphisms which induce representations where only the irreducible 
representations in $\CC$ appear. Define
\begin{align*}
\homst{G}{\UH}_\CC := & \\ 
\hom(G, &\UH)_\CC \bigcap \homst{G}{\UH}.\end{align*}
\end{definition}
We have therefore
\begin{align*}
&\homst{G}{\UH}_\CC:= \\
&\{ \alpha \in  \hom(G,\UH) \mid\dim(\HH_\alpha^V)=0 \ \mbox{for} \ V \notin \CC \\
 & \ \mbox{and} \ \dim(\HH_\alpha^V)=\infty \ \mbox{for} \ V \in \CC  \}.\end{align*}

The spaces $\homst{G}{\UH}_\CC$ are neither closed nor open, nevertheless the action by
 conjugation of $\UH$ on $\homst{G}{\UH}_\CC$ is transitive and we are interested in studying the 
 properties of this action.
 
 \begin{definition}
 Take a stable homomorphism $\alpha \in \homst{G}{\UH}_\CC$. Define the conjugation map
 $$\pi_\alpha: \UH \to \homst{G}{\UH}_\CC, \ \ F \mapsto F\alpha F^{-1}.$$
\end{definition}

The map $\pi_\alpha$ is continuous and is surjective. Any other stable 
homomorphism $$\alpha' \in \homst{G}{\UH}_\CC$$ induces an isomorphism $\HH \cong \bigoplus_{V \in \CC}\HH_{\alpha'}^V$. For each $V \in \CC$ choose a $G$-equivariant unitary isomorphism $F^V: \HH_\alpha^V \stackrel{\cong}{\to}
\HH_{\alpha'}^V$ and assemble them into a $G$-equivariant unitary automorphism 
$$F:= \bigoplus_{V \in \CC}F^V : \HH \stackrel{\cong}{\to} \HH.$$  By construction the unitary
automorphism satisfies $F  \alpha(g) = \alpha'(g) F$ for all $g \in G$, and therefore $F\alpha F^{-1}
=\alpha'$.

The surjectivity of the map $\pi_\alpha$ implies that the map
\begin{align*}
\UH/\UH_\alpha  & \too \homst{G}{\UH}_\CC \\ [F] & \mapsto F\alpha F^{-1}\end{align*}
is a continuous map which is moreover bijective, where $\UH_\alpha:= \{T\in \UH \mid T \alpha T^{-1}=\alpha\}$ is the isotropy group of $\alpha$. We will show that this map is not a homeomorphism,
thus implying that the $\pi_\alpha$ is not a $\UH_\alpha$-principal bundle. Nevertheless, the 
pullback of $\pi_\alpha$ for maps defined on paracompact spaces is indeed a $\UH_\alpha$-principal
bundle.

\begin{theorem} \label{no sections homst}
Suppose that $\CC$ contains the trivial representation, then
the conjugation map $\pi_\alpha: \UH \to \homst{G}{\UH}_\CC$ has no sections.
\end{theorem}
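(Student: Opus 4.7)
The plan is to argue by contradiction, reducing to Lemma \ref{no global sections for grassmannian} by passing through the Haar-integral projection onto the trivial isotypical component. Assume $s : \homst{G}{\UH}_\CC \to \UH$ is a continuous section of $\pi_\alpha$. Since the trivial representation lies in $\CC$ and $\alpha$ is stable, the projection $Q := \int_G \alpha(g)\, dg$ onto $\HH^G_\alpha$ has infinite-dimensional image; assuming also that $\CC$ contains at least one further irreducible representation (otherwise $\homst{G}{\UH}_\CC$ is a single point and the claim is vacuous), the complementary subspace is also infinite-dimensional, so $Q\in\sg$. The same construction applied to every $\beta$ yields a continuous map
$$ \Pi : \homst{G}{\UH}_\CC \to \sg,\qquad \Pi(\beta) = \int_G \beta(g)\, dg. $$
Interchanging $s$ with the Haar integral gives the key identity
$$ \pi_Q\circ s(\beta)= s(\beta)\, Q\, s(\beta)^{-1} = \int_G s(\beta)\alpha(g)s(\beta)^{-1}\, dg = \int_G \beta(g)\, dg = \Pi(\beta). $$

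If one can produce a continuous section $r : \sg \to \homst{G}{\UH}_\CC$ of $\Pi$, then the composition $\sigma := s\circ r : \sg \to \UH$ satisfies $\pi_Q \circ \sigma = \Pi\circ r = \mathrm{id}_\sg$, delivering a continuous section of $\pi_Q$ and contradicting Lemma \ref{no global sections for grassmannian}. For each $P\in\sg$ one would set $r(P) = \beta_P$, where $\beta_P$ acts trivially on $P(\HH)$ and by a stable representation with isotypes $\CC\setminus\{\mathbb{C}\}$ on $(\mathrm{Id}-P)(\HH)$.

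The main obstacle is constructing $r$: continuously in $P$, one must equip the varying Hilbert space $(\mathrm{Id}-P)(\HH)$---the fiber of the continuous field $\D^\perp$ over $\sg$---with a stable $G$-representation structure of the prescribed type. Because $\D^\perp$ is itself nowhere locally trivial over $\sg$ (by symmetry with \cite[\S16, Cor. 2]{DixDou}), no naive global transport of a reference representation works. The approach I would take is to produce only a \emph{local} section of $\Pi$ over an open neighborhood $U$ of a chosen $P_0\in\sg$---which still yields a local section of $\pi_Q$ over $U$ and still contradicts the nowhere-local-triviality of $\D$---and to build the local section by fixing a reference stable representation $\gamma$ on $(\mathrm{Id}-P_0)(\HH)$ and transporting it along any continuous local family of partial isometries $(\mathrm{Id}-P_0)(\HH) \to (\mathrm{Id}-P)(\HH)$, constructed by the standard square-root of $(\mathrm{Id}-P_0)(\mathrm{Id}-P)(\mathrm{Id}-P_0)$ on a small enough neighborhood where this operator is close to the identity. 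Should this local construction still resist, an alternative route is to bypass sections of $\Pi$ altogether by arguing that the trivialization of the pullback continuous field $\Pi^*\D$ over $\homst{G}{\UH}_\CC$ forced by $s$ (via $(\beta,v)\mapsto s(\beta)v$ for $v\in Q(\HH)$) is incompatible with the local structure of $\D$.
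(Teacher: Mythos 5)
Your setup is sound as far as it goes: $\Pi(\beta)=\int_G\beta(g)\,dg$ is continuous, and the identity $\pi_Q\circ s=\Pi$ is correct. But the step you yourself flag as the ``main obstacle'' is a genuine gap, and the repair you sketch does not work. The square-root construction of partial isometries $(\Id-P_0)(\HH)\to(\Id-P)(\HH)$ requires $(\Id-P_0)(\Id-P)(\Id-P_0)$ to be invertible (indeed close to the identity) on $(\Id-P_0)(\HH)$, which is a \emph{norm}-closeness condition; a strong-operator neighborhood of $P_0$ only controls $P$ on finitely many vectors and contains projections for which that operator is not even injective. More to the point, a continuous local family of unitaries $(\Id-P_0)(\HH)\to(\Id-P)(\HH)$ is precisely a local trivialization of $\D^\perp$, which is ruled out by the very Dixmier--Douady corollary you are invoking; so a local section $r$ of $\Pi$ of the form you describe is exactly as obstructed as the theorem you are trying to prove. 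The fallback suggestion is also not a proof: the trivializability of the pullback $\Pi^*\D$ over $\homst{G}{\UH}_\CC$ is not by itself contradictory (pullbacks of nontrivial fields are often trivial); to extract a contradiction you must transport the trivialization back over a copy of a grassmannian sitting inside $\homst{G}{\UH}_\CC$ compatibly with $\Pi$ --- which is again a section of $\Pi$. (A side remark: if $\CC$ consists only of the trivial representation the statement is not vacuous but false, since $\homst{G}{\UH}_\CC$ is a point; like the paper, you need a nontrivial $V\in\CC$.)

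The paper escapes this circle by reversing the direction of the arrow. Instead of trying to section a map \emph{onto} a grassmannian, it constructs a continuous injection \emph{from} a grassmannian into $\homst{G}{\UH}_\CC$: fix a nontrivial $V\in\CC$, and for $Z\in\Gr(\hom_G(V,\HH_\alpha^V))$ let $\Phi(Z)$ be the homomorphism that agrees with $\alpha$ everywhere except that it acts trivially on the orthogonal complement of $ev(V\otimes Z)$ inside $\HH_\alpha^V$. This requires no continuous identification of varying fibers with a fixed Hilbert space --- $\Phi(Z)(g)$ is assembled from $\alpha(g)$ and orthogonal projections --- and the hypothesis that the trivial representation lies in $\CC$ is exactly what keeps $\Phi(Z)$ stable of type $\CC$ (the switched-off part is dumped into the trivial isotype). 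A section $\sigma$ of $\pi_{\Phi(Z_0)}$ then yields $G$-equivariant unitaries $ev(V\otimes Z_0)\to ev(V\otimes Z)$ depending continuously on $Z$, hence a trivialization $(Z,f_0)\mapsto(Z,\sigma(\Phi(Z))\circ f_0)$ of the canonical field over $\Gr(\hom_G(V,\HH_\alpha^V))$, contradicting Dixmier--Douady. If you want to salvage your argument, you should replace the construction of $r$ by a device of this kind, where the only thing that varies is \emph{which subspace the fixed representation $\alpha$ is restricted to}, never a representation structure on a moving subspace.
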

\begin{proof}
Note that in the case that $G=\mathbb Z/2\mathbb Z$  we have a homeomorphism
$$\homst{\mathbb Z/2\mathbb Z}{\UH}\stackrel{\cong}{\to} \sg $$
where the homomorphism $$\beta  \in \homst{\mathbb Z/2\mathbb Z}{\UH}$$ is mapped
to the orthogonal projector $$\frac{1}{2}\left(\beta(1) + \beta(-1)\right).$$
 The same proof of Lemma \ref{no global sections for grassmannian} shows that 
 the map $\pi_\alpha: \UH \to \homst{\mathbb Z/2\mathbb Z}{\UH}$ has no sections.
 
 The proof of the general case is based on the non existence of sections for the canonical field over the infinite grassmannian. 
 We just need to find an appropriate injective map from $\sg$ to  $\homst{G}{\UH}_\CC$.
 
Choose a representation $V \in \CC$ different from the trivial representation.
 Take the isotypical decomposition of $\HH \cong \bigoplus_{W \in \CC} \HH_\alpha^W$ defined by $\alpha$. Consider the infinite grassmannian
 $$\Gr(\hom_G(V,\HH_\alpha^V))$$
 of the Hilbert space $\hom_G(V,\HH_\alpha^V)$ and for each $Z \in \Gr(\hom_G(V,\HH_\alpha^V))$
 denote by $ev(V \otimes Z)$ the subspace of $\HH_\alpha^V$ defined by the image of $V \otimes Z$
 under the evaluation map  $$ev: V \otimes \hom_G(V,\HH_\alpha^V) \to \HH_\alpha^V$$ with $ev(v \otimes f)=f(v)$ that was previously defined in \eqref{definition of evaluation map}.
 
 Denote by $ev(V \otimes Z)^{\perp_{\HH_\alpha^V}}$ the orthogonal complement of $ev(V \otimes Z)$ in $\HH_\alpha^V$, i.e  $$ev(V \otimes Z) \oplus  ev(V \otimes Z)^{\perp_{\HH_\alpha^V}} \cong \HH_\alpha^V$$
 and define the map
 \begin{align*}
 \Phi:\Gr(\hom_G(V,\HH_\alpha^V)) & \to \homst{G}{\UH}_\CC \\ Z & \mapsto
 \Phi(Z)
 \end{align*} with
 $$\Phi(Z)(g)(v\oplus v') :=  \alpha(g)v \oplus v'$$
 where $$v \in \left( ev(V \otimes Z) \oplus \bigoplus_{W \in \CC, W \neq V} \HH_\alpha^V \right),$$  $v' \in ev(V \otimes Z)^{\perp_{\HH_\alpha^V}}$ and $g \in G$.

Note that the homomorphism $\Phi(Z)$ only disagrees with $\alpha$ on the subspace $ev(V \otimes Z)^{\perp_{\HH_\alpha^V}}$,
that the isotypical subspace of the homomorphism $\Phi(Z)$ associated to $V$ is precisely $ev(V \otimes Z)$, i.e.
$$\HH_{\Phi(Z)}^V = ev(V \otimes Z), $$
and that
\begin{align*}
\hom_G(V, \HH_{\Phi(Z)}^V) =Z.
\end{align*}

 The map $\Phi$ is continuous since it can be defined as the composition of projections, and it is moreover
 injective. 
 
 Choose a base point $Z_0 \in \Gr(\hom_G(V,\HH_\alpha^V))$ and let us suppose that the map 
 $\pi_{\Phi(Z_0)} : \UH \to  \homst{G}{\UH}_\CC$, $ F \mapsto F \Phi(Z_0) F^{-1}$ has a section 
 $\sigma: \homst{G}{\UH}_\CC \to \UH$. Hence we would have that for all $g \in G$ the following equality
 holds
 $$\sigma(\Phi(Z)) \left( \Phi(Z_0)(g) \right) = \left(\Phi(Z)(g)\right) \sigma(\Phi(Z)),$$
which implies that $\sigma(\Phi(Z))$ induces a $G$-equivariant unitary isomorphism 
 between the isotypical components
 $$\sigma(\Phi(Z))|_{\HH_{\Phi(Z_0)}^W} : {\HH_{\Phi(Z_0)}^W} \stackrel{\cong}{\to} {\HH_{\Phi(Z)}^W}$$
 and in particular it induces a $G$-equivariant isomorphism
 $$\sigma(\Phi(Z))|_{ev(V \otimes Z_0)} : ev(V \otimes Z_0) \stackrel{\cong}{\to} ev(V \otimes Z).$$
 
 Denote by $\D$ the canonical continuous field of Hilbert spaces over $\Gr(\hom_G(V,\HH_\alpha^V))$
 defined as follows:
  \begin{align*}
  \D & := \{(Z ,f) \in  \\
  & \Gr(\hom_G(V,\HH_\alpha^V)) \times \hom_G(V,\HH_\alpha^V) \mid f \in Z \},\end{align*}
 and construct the following map:
 \begin{align*}
 \theta: \Gr(\hom_G(V,\HH_\alpha^V)) \times Z_0 & \to \D \\ (Z,f_0) & \mapsto (Z, \sigma(\Phi(Z)) \circ f_0).\end{align*}
 
 Note that the  homomorphism $\sigma(\Phi(Z)) \circ f_0$ belongs to $Z$, since 
 the image of $f$ lies in ${\HH_{\Phi(Z_0)}^V}$ and the unitary isomorphism
 $$\sigma(\Phi(Z))|_{\HH_{\Phi(Z_0)}^V} : {\HH_{\Phi(Z_0)}^V} \stackrel{\cong}{\to} {\HH_{\Phi(Z)}^V}$$ is $G$-equivariant; hence $$\sigma(\Phi(Z)) \circ f_0 \in  \hom_G(V,\HH_{\Phi(Z)}^V) =Z.$$ Moreover the map $\theta$ is continuous since $\sigma$ is continuous, and its inverse map
 is simply $\theta^{-1}(Z,f) = (Z, \sigma(\Phi(Z))^{-1}f)$.
 Therefore $\theta$ would be a trivialization of the canonical field 
 $\D$ which contradicts \cite[\S16, Cor. 2]{DixDou} where it is shown that $\D$ is nowhere locally trivial. 
 
 Then the section $\sigma$ cannot exist and the theorem follows.
\end{proof}

\begin{theorem} \label{theorem existence of sections abelian compact}
 Let $K$ be an abelian  compact Lie group, $B$ a paracompact
space of finite covering dimension, with base point $b_0 \in B$ and $f: B \to \homst{K}{\UH}_\CC$ a continuous map
with $\CC \subset \Irrep(K)$. Then there exist an extension
$\sigma: B \to \UH$ that makes the following diagram commutative
$$\xymatrix{
&& \UH \ar[d]^{\pi_{f(b_0)}} \\
B \ar[rr]^-{f} \ar[rru]^-{\sigma} & & \homst{K}{\UH}_\CC
}$$
\end{theorem}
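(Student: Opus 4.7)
Since $K$ is an abelian compact Lie group, every irreducible representation of $K$ is one-dimensional, so $\CC \subset \widehat K$ is a (countable) collection of characters $\chi : K \to S^1$. My plan is to build, for each $\chi \in \CC$, a pointwise orthonormal continuous frame of the isotypical subspaces $\HH^\chi_{f(b)}$, and then define $\sigma(b)$ to be the unique unitary sending the frame at $b_0$ to the frame at $b$. Abelianness is what allows this to intertwine the representations: on $\HH^\chi_{f(b)}$ the operator $f(b)(g)$ acts as the scalar $\chi(g)$, so \emph{any} unitary sending $\HH^\chi_{f(b_0)}$ onto $\HH^\chi_{f(b)}$ automatically intertwines $f(b_0)(g)$ and $f(b)(g)$.

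First, for each $\chi \in \CC$ I would define the projector
$$P^\chi(b) := \int_K \overline{\chi(g)}\, f(b)(g)\, dg,$$
interpreted in the strong operator sense. By Lemma \ref{evaluation-adjuncion} the map $(b,g) \mapsto f(b)(g)x$ is jointly continuous for every $x \in \HH$, and compactness of $K$ then shows $b \mapsto P^\chi(b)x$ is continuous, so $P^\chi : B \to \BH$ is continuous for the strong operator topology. A standard Peter--Weyl argument identifies $P^\chi(b)$ with the orthogonal projector onto $\HH^\chi_{f(b)}$; stability gives $\dim \HH^\chi_{f(b)} = \infty$, and whenever $|\CC|\ge 2$ the orthogonal complement is infinite-dimensional too, so $P^\chi : B \to \sg$ is continuous. (The case $|\CC|=1$ is trivial: $f$ is then constant and $\sigma \equiv \Id$ works.) Next, apply the Dixmier--Douady machinery invoked in the proof of Lemma \ref{lemma:sections of UH on paracompact B} to the pullback field $(P^\chi)^*\D$ over the paracompact, finite-covering-dimension space $B$: I obtain continuous sections $\bar s^{\,\chi}_n : B \to \HH$ such that $\{\bar s^{\,\chi}_n(b)\}_{n \in \mathbb N}$ is an orthonormal basis of $\HH^\chi_{f(b)}$ for each $b$. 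Taking the union over $\chi \in \CC$ yields an orthonormal basis $\{\bar s^{\,\chi}_n(b)\}_{\chi,n}$ of $\HH$ depending continuously on $b$.

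Finally I would define $\sigma(b) \in \UH$ by $\sigma(b)\, \bar s^{\,\chi}_n(b_0) := \bar s^{\,\chi}_n(b)$ for every $\chi \in \CC$ and $n \in \mathbb N$, extending linearly. For continuity of $\sigma$ I verify the hypothesis of Lemma \ref{evaluation-adjuncion}: the map $(b,h)\mapsto \sigma(b)h$ is continuous, because for $h=\sum c_{n,\chi}\bar s^{\,\chi}_n(b_0)$ the series $\sigma(b)h=\sum c_{n,\chi}\bar s^{\,\chi}_n(b)$ converges uniformly in $b$ after truncation (using unitarity of $\sigma(b)$ and the continuity of each $\bar s^{\,\chi}_n$). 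The intertwining identity $\sigma(b) f(b_0)(g) = f(b)(g)\sigma(b)$ is then checked on each basis vector: both sides send $\bar s^{\,\chi}_n(b_0)$ to $\chi(g)\,\bar s^{\,\chi}_n(b)$. Hence $\sigma(b)f(b_0)\sigma(b)^{-1}=f(b)$, as required. The main obstacle is the existence of the orthonormal frames $\bar s^{\,\chi}_n$ compatibly across all $\chi$; this reduces to applying Dixmier--Douady separately to each continuous sub-field $\HH^\chi_{f(\cdot)}$, and the rest of the argument is essentially bookkeeping made possible by the abelianness of $K$.
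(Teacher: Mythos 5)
Your proposal is correct and follows essentially the same route as the paper: averaging $f(b)$ against each character to get continuous projector maps $B \to \sg$, trivializing the pulled-back Dixmier--Douady fields to obtain pointwise orthonormal frames of each isotypical subspace, and defining $\sigma(b)$ as the unitary matching the frame at $b_0$ to the frame at $b$, with continuity via Lemma \ref{evaluation-adjuncion}. The only cosmetic difference is at the end, where you verify the intertwining relation directly on basis vectors while the paper invokes the injectivity of $\alpha \mapsto (P_{\HH_\alpha^V})_{V\in\CC}$ into $\prod_{V\in\CC}\sg$; these amount to the same use of abelianness, and your explicit treatment of the $|\CC|=1$ case is a minor point the paper leaves implicit.
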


\begin{proof}
Since all irreducible representations of $K$ are 1-dimensional, let us encode the information
of each irreducible $V \in \Irrep(K)$ by a homomorphism $\chi_V: K \to S^1 \subset \mathbb C^*$.
Choose a normalized left and right invariant measure on $K$, and for any $\alpha \in 
\homst{K}{\UH}_\CC$  define the operator
$$\psi_V(\alpha) : \HH \to \HH, \ \ h \mapsto \int_K \alpha(k) \chi_V(k^{-1}) h\, \mathrm{d}k. $$ 

Whenever $h \in \HH_\alpha^V$ we have that $\psi_V(\alpha)h =h$, and whenever
$h \in  \HH_\alpha^W$ for $W \neq V$ then $\psi_V(\alpha)h=0$. Therefore the operator
$\psi_V(\alpha)$ is equivalent to the orthogonal projector $P_{\HH_\alpha^V}$ that
projects $\HH$ to $\HH_\alpha^V$. For $V \in \CC$  the assignment $\psi_V$ defines a map
\begin{align*}
\psi_V: \homst{K}{\UH}_\CC & \to \sg \\
 \alpha & \mapsto \psi_V(\alpha)=P_{\HH_\alpha^V}\end{align*}
which is continuous since the integration is over a compact Lie group.

Consider the composition $\psi_V \circ f : B \to \sg$ and note that $(\psi_V \circ f)^* \D$
is trivializable. By  \cite[\S14, Thm. 4]{DixDou} we know that there exist orthogonal sections $\{ {s}^V_n \}_{n \in \mathbb N}$
of $(\psi_V \circ f)^* \D$ which satisfy $\bar{s}^V_n(b) \in \HH_{\psi_V(f(b))}^V$
and that moreover the vectors $\{{s}^V_n(b) \}_{n \in \mathbb N}$ are an orthogonal base of $\HH_{\psi_V(f(b))}^V$.

Define the map $\phi : B \times \HH \to \HH$ by the assignment 
$$\phi(b, {s}^V_n(b_0)) = {s}^V_n(b)$$
where $V$ runs over the irreducible representations in $\CC$ and $\HH$ is viewed
as $\HH \cong \bigoplus_{V \in \CC} \HH_{f(b_0)}^V$ in the source and as
$\HH \cong \bigoplus_{V \in \CC} \HH_{f(b)}^V$ in the target. Since the sections $s^V_n$ are continuous, the map $\phi$ is continuous.

By Lemma \ref{evaluation-adjuncion} the map $\sigma: B \to \UH$
defined by the equation $\sigma(b)h :=\phi(b,h)$ is continuous  and we have that
$$\sigma(b) {s}^V_n(b_0)={s}^V_n(b)$$
for all $V \in \CC$. In particular we have that $$\sigma(b) P_{\HH_{f(b_0)}^V} \sigma(b)^{-1}=
P_{\HH_{f(b)}^V}$$ and therefore $$\sigma(b) \psi_V(f(b_0)) \sigma(b)^{-1}= \psi_V(f(b))$$
for all $V \in \CC$.

Define the map
\begin{align*}
\Psi :  \homst{K}{\UH}_\CC & \to \prod_{V \in \CC}\sg \\ 
\alpha & \mapsto \prod_{V \in \CC} \psi_V(\alpha)\end{align*}
and note that 
the argument above implies that $\sigma$ makes the following diagram commutative
$$\xymatrix{ && \UH \ar[d]^{\prod_{V \in \CC} \pi_{\psi_V(f(b_0))}} \\
B \ar[rr]^{\Psi \circ f} \ar[urr]^{\sigma} && \prod_{V \in \CC} \sg }$$
where $\pi_{\psi_V(f(b_0))} : \UH \to \sg$ is the conjugation map $\pi_{\psi_V(f(b_0))}(F)=F 
\psi_V(f(b_0)) F^{-1}$.

Finally note that the map $$\Psi : \homst{K}{\UH}_\CC \to \prod_{V \in \CC}\sg$$ is injective
since for abelian groups the isotypical spaces determine the homomorphism. Therefore the following
diagram is commutative 
$$\xymatrix{ &\UH \ar[r]^= \ar[d]_{\pi_{f(b_0)}}& \UH \ar[d]_{\prod_{V \in \CC} \pi_{\psi_V(f(b_0))}} \\
B \ar[r]_-{f} \ar[ur]^{\sigma} & \homst{K}{\UH}_\CC \ar@{^(->}[r]_{\Psi}  &\prod_{V \in \CC} \sg }$$
and the theorem follows.
\end{proof}

\begin{corollary} \label{corollary existence section compact and connected}
 Let $G$ be a compact and connected Lie group, $B$ a paracompact
space of finite covering dimension, with base point $b_0 \in B$ and $f: B \to \homst{G}{\UH}_\CC$ a continuous map
with $\CC \subset \Irrep(G)$. Then there exist an extension
$\sigma: B \to \UH$ that makes the following diagram commutative
$$\xymatrix{
&& \UH \ar[d]^{\pi_{f(b_0)}} \\
B \ar[rr]^-{f} \ar[rru]^\sigma& & \homst{G}{\UH}_\CC
}$$
\end{corollary}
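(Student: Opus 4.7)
The plan is to adapt the strategy of Theorem \ref{theorem existence of sections abelian compact} to the non-abelian setting. For abelian $K$ the proof works because the isotypical projectors $\psi_V(\alpha)=P_{\HH_\alpha^V}$ assemble into an injective continuous map $\Psi$; for general compact $G$ a stable homomorphism $\alpha$ is not recovered from its isotypical subspaces alone, since one also needs the $G$-equivariant identification $V\otimes\hom_G(V,\HH_\alpha)\cong\HH_\alpha^V$. The key new idea is to track the multiplicity spaces $M_V(\alpha):=\hom_G(V,\HH_\alpha)$ realized as closed subspaces of the \emph{fixed} Hilbert space $V^{*}\otimes\HH\cong\hom(V,\HH)$, rather than the isotypical subspaces $\HH_\alpha^V\subset\HH$.

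For each $V\in\CC$, define the Peter--Weyl projection
$$\pi_V(\alpha):=\int_G \rho_V^{*}(g)\otimes\alpha(g)\,\mathrm{d}g\in\mathcal{B}(V^{*}\otimes\HH),$$
which is the orthogonal projection onto the $G$-invariants $\hom_G(V,\HH_\alpha)=M_V(\alpha)$. For $\alpha$ stable, both $M_V(\alpha)$ and its orthogonal complement in $V^{*}\otimes\HH$ are infinite dimensional (the sole trivial exception being $\CC=\{V\}$ with $\dim V=1$, where $\homst{G}{\UH}_{\CC}$ reduces to a point), so $\pi_V(\alpha)\in\Gr(V^{*}\otimes\HH)$ and, exactly as in Theorem \ref{theorem existence of sections abelian compact}, the assignment $\psi_V:\alpha\mapsto\pi_V(\alpha)$ is continuous. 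Applying Lemma \ref{lemma:sections of UH on paracompact B} to the composition $\psi_V\circ f:B\to\Gr(V^{*}\otimes\HH)$ produces a continuous lift $\tau_V:B\to U(V^{*}\otimes\HH)$ satisfying $\tau_V(b)\,\pi_V(f(b_0))\,\tau_V(b)^{-1}=\pi_V(f(b))$.

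Fix an orthonormal basis $\{s_n^V\}_{n\in\mathbb{N}}$ of $M_V(f(b_0))$ and an orthonormal basis $\{v_k^V\}_{k=1}^{\dim V}$ of $V$. A short Schur's-lemma calculation applied to the evaluation map shows that the vectors $\sqrt{\dim V}\,(\tau_V(b)(s_n^V))(v_k^V)$, ranging over $V\in\CC$, $n\in\mathbb{N}$ and $1\leq k\leq\dim V$, form an orthonormal basis of $\HH_{f(b)}=\bigoplus_{V\in\CC}\HH_{f(b)}^V$ that varies continuously in $b$. Declaring $\sigma(b)$ to be the unitary taking the $b_0$-basis to the $b$-basis, Lemma \ref{evaluation-adjuncion} gives continuity of $\sigma:B\to\UH$, and the intertwining identity $\sigma(b)\,f(b_0)(g)\,\sigma(b)^{-1}=f(b)(g)$ follows at once from the fact that each $\tau_V(b)(s_n^V)$ lies in $M_V(f(b))=\hom_G(V,\HH_{f(b)})$, hence is $G$-equivariant. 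The hard part is the very first step: producing a projector that varies continuously with $\alpha$, lands in a fixed infinite grassmannian so that Lemma \ref{lemma:sections of UH on paracompact B} applies, and whose lift carries the full $G$-equivariant data. Enlarging the ambient Hilbert space from $\HH$ to $V^{*}\otimes\HH$ addresses all three requirements simultaneously; the remainder of the argument is then a direct transcription of the proof of Theorem \ref{theorem existence of sections abelian compact}.
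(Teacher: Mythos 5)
Your proof is correct, but it takes a genuinely different route from the paper. The paper's proof is a two-line reduction: it restricts along the inclusion $\iota:K\hookrightarrow G$ of a maximal abelian subgroup, observes that $\iota^*:\homst{G}{\UH}_\CC\to\homst{K}{\UH}_{\iota^*\CC}$ is injective because a representation of a connected compact group is determined by its restriction to a maximal torus, and then simply invokes Theorem \ref{theorem existence of sections abelian compact} for the composite $\iota^*\circ f$; the $\sigma$ obtained there already intertwines $f(b_0)$ with $f(b)$ by injectivity of $\iota^*$. You instead redo the abelian argument from scratch in the non-abelian setting, replacing the isotypical projectors $P_{\HH_\alpha^V}\in\sg$ by the Peter--Weyl projectors onto the multiplicity spaces $\hom_G(V,\HH_\alpha)$ sitting inside the fixed ambient space $V^*\otimes\HH$, and then reassembling $\HH$ via the evaluation map and the Schur orthogonality normalization $\sqrt{\dim V}$. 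The details check out: for $V\in\CC$ the invariants and their complement in $V^*\otimes\HH$ are both infinite dimensional (with the one degenerate exception you flag, where the target is a point), so Lemma \ref{lemma:sections of UH on paracompact B} applies; the vectors $\sqrt{\dim V}\,(\tau_V(b)s_n^V)(v_k^V)$ are indeed orthonormal by Schur's lemma and span $\HH^V_{f(b)}$ because $\tau_V(b)$ carries $\hom_G(V,\HH_{f(b_0)})$ onto $\hom_G(V,\HH_{f(b)})$; and the intertwining identity follows from equivariance of each $\tau_V(b)s_n^V$. What the paper's approach buys is brevity, at the cost of using connectedness essentially. What your approach buys is generality: nothing in it uses that $G$ is connected or abelian, so it proves the lifting statement for an arbitrary compact Lie group --- precisely the extension the authors list in Section \ref{Chapter applications} as left for further research (at least for this intermediate step; Theorem \ref{theorem principal bundle} would still need separate treatment).
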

\begin{proof}
Let $K$ be a maximal abelian subgroup of $G$  and denote by $\iota: K \to G$ the inclusion.
Recall that any representation of the group $G$ is uniquely determined by its restriction
to a maximal abelian subgroup, and therefore the restriction map
$$ \iota^*:\homst{G}{\UH}_\CC \hookrightarrow  \homst{K}{\UH}_{\iota^*\CC}$$
is injective. Here we have denoted by $\iota^*\CC \subset \Irrep(K)$ 
the set of irreducible representations in $K$ which appear as restrictions of representations $V$ of $\CC$.

By Theorem \ref{theorem existence of sections abelian compact} we know that there 
exists $\sigma$ for the composition map$$\iota^* \circ f : B \to \homst{K}{\UH}_{\iota^*\CC},$$
and since $\iota^*$ is injective, the following diagram commutes
$$\xymatrix{& \UH \ar[d]^{\pi_{f(b_0)}} \ar[r]^= & \UH \ar[d]^{\pi_{\iota^*(f(b_0))}}\\
B \ar[r]^-f \ar[ru]^\sigma & \homst{G}{\UH}_\CC  \ar@{^(->}[r]^{\iota^*}&  \homst{K}{\UH}_\CC
}$$
and the corollary follows.
\end{proof}

The previous results have the following consequence:

\begin{theorem} \label{hom(G,UH) weakly homotopy equivalent to point}
Let $G$  be a compact Lie group which moreover is  connected or abelian,  and $\CC\subset \Irrep(G)$ a choice of irreducible
representations. Then the space $\homst{G}{\UH}_\CC$ is weakly homotopy equivalent to a point.
\end{theorem}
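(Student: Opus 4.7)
The plan is to verify that all homotopy groups $\pi_n(\homst{G}{\UH}_\CC, \alpha)$ vanish by combining the lifting statements already established (Corollary \ref{corollary existence section compact and connected} in the connected case, Theorem \ref{theorem existence of sections abelian compact} in the abelian case) with the contractibility of $\UH$. Since the sphere $S^n$ is paracompact of covering dimension $n$, the hypotheses of the lifting theorems are met for any continuous map $S^n \to \homst{G}{\UH}_\CC$.

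For $n = 0$, given $\alpha, \alpha' \in \homst{G}{\UH}_\CC$, the surjectivity of $\pi_\alpha$ produces $F \in \UH$ with $F\alpha F^{-1} = \alpha'$; since $\UH$ is path-connected, a path $\gamma$ from $\mathrm{Id}$ to $F$ projects under conjugation to a path from $\alpha$ to $\alpha'$ in $\homst{G}{\UH}_\CC$, establishing path-connectedness.

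For $n \geq 1$, I would begin with a based continuous map $f : (S^n, *) \to (\homst{G}{\UH}_\CC, \alpha)$ and apply the relevant lifting theorem with $b_0 = *$ to produce $\sigma : S^n \to \UH$ satisfying $\sigma(x)\alpha\sigma(x)^{-1} = f(x)$ for all $x \in S^n$. In particular $T_0 := \sigma(*)$ lies in the isotropy group $\UH_\alpha$, so the modified map $\tilde{\sigma}(x) := \sigma(x)T_0^{-1}$ remains a lift of $f$ (because $T_0\alpha T_0^{-1} = \alpha$) and now satisfies $\tilde{\sigma}(*) = \mathrm{Id}$. Since $\UH$ is contractible, $\pi_n(\UH, \mathrm{Id}) = 0$, hence $\tilde{\sigma}$ admits a based null-homotopy $H : S^n \times [0,1] \to \UH$ with $H(\cdot, 0) = \tilde{\sigma}$, $H(\cdot, 1) \equiv \mathrm{Id}$, and $H(*, t) = \mathrm{Id}$ for all $t$. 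The composition $(x, t) \mapsto H(x, t)\,\alpha\,H(x, t)^{-1}$ then furnishes a based null-homotopy of $f$ inside $\homst{G}{\UH}_\CC$, since at $t = 0$ it returns $f(x)$, at $t = 1$ it is constantly $\alpha$, and at $x = *$ it is $\alpha$ throughout.

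There is no serious obstacle once the lifting theorems are invoked; the only subtlety is preserving the basepoint through the contraction, which is handled cleanly by the right-translation by $T_0^{-1}$, a move that is legal precisely because $T_0$ centralizes $\alpha$. Assembling the cases $n = 0$ and $n \geq 1$ yields that all homotopy groups of $\homst{G}{\UH}_\CC$ are trivial, whence it is weakly homotopy equivalent to a point.
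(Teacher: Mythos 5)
Your proposal is correct and follows essentially the same route as the paper: invoke Theorem \ref{theorem existence of sections abelian compact} or Corollary \ref{corollary existence section compact and connected} to lift a based map $f:(S^n,*)\to(\homst{G}{\UH}_\CC,\alpha)$ to $\UH$, then use contractibility of $\UH$ to null-homotope the lift and push the homotopy back down via conjugation, with surjectivity of $\pi_\alpha$ handling connectedness. Your explicit right-translation by $T_0^{-1}$ to fix the basepoint is a small tidying-up of a step the paper leaves implicit, but it is the same argument.
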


\begin{proof}
We know that $\homst{G}{\UH}_\CC$ is connected since the conjugation map
\begin{align*}
\pi_\alpha : \UH  & \to \homst{G}{\UH}_\CC\\
  F  & \mapsto F \alpha F^{-1}\end{align*}
is surjective for any choice of $\alpha \in \homst{G}{\UH}_\CC$.

Take any base point map $$f : (S^n, *) \to ( \homst{G}{\UH}_\CC, \alpha)$$ and note that by
Theorem  \ref{theorem existence of sections abelian compact} and Corollary \ref{corollary existence section compact and connected} there exists $\sigma : (S^n,*) \to (\UH,\Id)$ such that
$f= \pi_\alpha \circ \sigma$. Since $\UH$ is contractible by \cite[\S11, Lem. 3]{DixDou}, there exists $\widetilde{\sigma} :(B^{n+1},*) \to (\UH, \Id)$ such that $\widetilde{\sigma}|_{S^n}= \sigma$. Hence we have that $$\pi_\alpha \circ \widetilde{\sigma}: (B^{n+1},*) \to  (\homst{G}{\UH}_\CC, \alpha)$$ satisfies $\pi_\alpha \circ \widetilde{\sigma}|_{S^n}= f$ and therefore the homotopy groups of $\homst{G}{\UH}_\CC$ 
are all trivial.

\end{proof}

\section{\small Spaces of projective unitary representations} \label{Chapter projective unitary representations}

Let $\HH$ be a separable and infinite dimensional Hilbert space. Let $\UH$ denote the group of unitary operators of $\HH$ and let $\PUH$ be the group of projective unitary operators, defined as the quotient of $\UH$ by its center,
$$PU(\mathcal{H}) = \frac{U(\mathcal{H})}{\{ \zeta \cdot \mathrm{Id}\mid |\zeta| =1\} } = U(\mathcal{H})/S^1 $$
where the center $Z(\UH)$ is identified with $S^1$. Then $\PUH$ fits in the following short exact sequence of groups
$$1 \to S^1 \hookrightarrow \UH \stackrel{\pi}{\to} \PUH \to 1 .$$ 
and moreover in \cite[Thm. 1]{Simms:70} it is shown that the above sequence is a $S^1$-principal bundle; in other words,
the quotient map $\UH \stackrel{\pi}{\to} \PUH$ has local cross sections. 

The group of projective unitary operators may be endowed with the strong operator topology, and
in \cite[Thm. 1]{Simms:70} it is shown that this topology agrees with the quotient
topology. Since $S^1$ acts on $\UH$ by isometries, we may endow the group $\PUH$ with the metric defined by the distance between the orbits, i.e. for
$T, U \in \PUH$ define
\begin{align*}
\langle \langle T , & U \rangle \rangle := \\
&\min \{ \langle \widetilde{T}, \widetilde{U} \rangle \colon
 \widetilde{T} \in \pi^{-1}(T), \widetilde{U} \in \pi^{-1}(U) \}.\end{align*}

Let $G$ be a compact Lie group and let $$\alpha : G \to \PUH$$ be a continuous homomorphism.
The homomorphism $\alpha$ defines a projective and unitary representation of $G$ on
the projective Hilbert space $\mathbb P\HH
:=\HH -\{0\}/{\mathbb C}^*$. 

Every homomorphism $\alpha : G \to \PUH$ defines a 
group $\widetilde{G}_\alpha:= \alpha^* \UH$ as the pullback of $\UH$ under $\alpha$. 
The group $\widetilde{G}_\alpha$ is a $S^1$-central extension of the group $G$ and
fits into the diagram
\begin{align*}
\xymatrix{ S^1 \ar[d] \ar[r]^\cong & S^1 \ar[d] \\
\widetilde{G}_\alpha \ar[d]^p \ar[r]^{\widetilde{\alpha}} & \UH \ar[d]^\pi \\
G \ar[r]^\alpha & \PUH
}
\end{align*}
where the bottom square is a pullback square, $\widetilde{\alpha}: \widetilde{G}_\alpha \to \UH$
is the induced continuous homomorphism and $p: \widetilde{G}_\alpha \to G$ is the projection
homomorphism.

Since the kernel of the homomorphism $$p: \widetilde{G}_\alpha \to G$$ acts
 on $\HH$ by multiplication, we only need to consider  irreducible representations of 
the group $\widetilde{G}_\alpha$ where the kernel of $p$ acts by multiplication. Consider
the set
\begin{align*}
\SSS(\widetilde{G}_\alpha) := \{ V \in \Irrep(\widetilde{G}_\alpha) \mid 
 \ker(p: \widetilde{G}_\alpha \to G) & \\ \ \mbox{acts by multiplication on} \ & V \}\end{align*}
and make the following definition:

\begin{definition} \label{definition stable homomorphism in PUH}
A continuous homomorphism $\alpha : G \to \PUH$ will be called {\it stable}
whenever the induced homomorphism $\widetilde{\alpha} : \widetilde{G}_\alpha \to \UH$
belongs to $\homst{\widetilde{G}_\alpha}{\UH}_{\SSS(\widetilde{G}_\alpha)}$.
Denote the set of stable homomorphisms from $G$ to $\PUH$ by $\homst{G}{\PUH}$.
\end{definition}

Since $G$ is compact and $\PUH$ is a metric space, the set of stable homomorphisms
$$\homst{G}{\PUH}$$ may be endowed with the supremum metric, i.e. for $\alpha, \beta \in
\homst{G}{\PUH}$ let
$$\langle \langle \alpha, \beta \rangle \rangle := \sup \{ \langle \langle \alpha(g), \beta(g) \rangle \rangle \mid g \in G \}.$$
By \cite[Thm. 46.8]{Munkres:Top} this metric induces the compact-open topology on $\homst{G}{\PUH}$.

Recall that a $S^1$-central extension of a Lie group $G$ is an extension $\widetilde{G}$
of $G$ which fits in the short exact sequence of Lie groups
$$1 \to S^1 \to \widetilde{G} \stackrel{p}{\to} G \to 1$$
and such that $S^1$ is a subgroup of the center $Z(\widetilde{G})$. Since Lie groups
are locally compact and $S^1$ is compact, the projection map $\widetilde{G} \stackrel{p}{\to} G$
is moreover a $S^1$-principal bundle.

Two $S^1$-central extensions $$\widetilde{G}_0 \stackrel{p_0}{\to} G \ \ \mbox{and} \ \ \widetilde{G}_1 \stackrel{p_1}{\to} G $$ of $G$ are isomorphic
as $S^1$-central extensions, if there exist an isomorphism of groups $$\phi: \widetilde{G}_0 \stackrel{\cong}{ \to} \widetilde{G}_1$$  such that  $p_1 \circ \phi = p_0$.
Denote by $\Ext(G,S^1)$ the set of isomorphism classes of $S^1$-central extensions of $G$
and denote by $[\widetilde{G}]$ the isomorphism class of an extension.

\begin{proposition} \label{proposition S1 central extensions}
Let $G$ be a compact Lie group. Then the canonical map $$\homst{G}{\PUH} \to \Ext(G,S^1), \ \ \alpha \mapsto [\widetilde{G}_\alpha]$$ induces an isomorphism of sets at the level of the conjugacy classes of stable homomorphisms, i.e.
$$\homst{G}{\PUH}/\PUH \cong \Ext(G,S^1).$$
\end{proposition}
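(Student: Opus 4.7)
The plan is to verify well-definedness of the map on $\PUH$-conjugacy classes, then establish bijectivity. The main technical input is the transitivity of the $\UH$-conjugation action on $\homst{\widetilde{G}}{\UH}_{\SSS(\widetilde{G})}$ for any compact Lie group $\widetilde{G}$, noted in Section \ref{Chapter unitary representations} immediately before the definition of the conjugation map $\pi_\alpha$; surjectivity will instead come from a direct construction using the pullback description of $\widetilde{G}_\alpha$.

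For well-definedness, suppose $\alpha' = F \alpha F^{-1}$ with $F \in \PUH$ and pick any lift $\widetilde{F} \in \UH$. Describing the pullback extensions as $\widetilde{G}_\alpha = \{(g, U) \in G \times \UH : \pi(U) = \alpha(g)\}$ and similarly for $\alpha'$, the assignment $(g, U) \mapsto (g, \widetilde{F} U \widetilde{F}^{-1})$ is a continuous isomorphism $\widetilde{G}_\alpha \cong \widetilde{G}_{\alpha'}$ of $S^1$-central extensions over $G$, independent of the choice of lift because scalars commute with every operator.

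For surjectivity, given $[\widetilde{G}] \in \Ext(G, S^1)$, the set $\SSS(\widetilde{G}) \subset \Irrep(\widetilde{G})$ is nonempty and countable. Take $\widetilde{\alpha} := \bigoplus_{V \in \SSS(\widetilde{G})} V^{\oplus \infty}$, a continuous unitary representation of $\widetilde{G}$ on a separable infinite dimensional Hilbert space which we identify with $\HH$. Since the central $S^1 \subset \widetilde{G}$ acts on $\widetilde{\alpha}$ by scalars, $\pi \circ \widetilde{\alpha}$ factors through a continuous homomorphism $\alpha : G \to \PUH$; the universal property of the pullback then produces a canonical $S^1$-equivariant isomorphism $\widetilde{G}_\alpha \cong \widetilde{G}$ over $G$, and $\alpha \in \homst{G}{\PUH}$ by construction.

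For injectivity, let $\alpha, \beta \in \homst{G}{\PUH}$ and let $\phi : \widetilde{G}_\alpha \stackrel{\cong}{\to} \widetilde{G}_\beta$ be an isomorphism of $S^1$-central extensions. Both $\widetilde{\alpha}$ and $\widetilde{\beta} \circ \phi$ then lie in $\homst{\widetilde{G}_\alpha}{\UH}_{\SSS(\widetilde{G}_\alpha)}$, because $\phi$ carries $\SSS(\widetilde{G}_\alpha)$ bijectively onto $\SSS(\widetilde{G}_\beta)$ while preserving isotypical multiplicities. Transitivity yields $F \in \UH$ with $F \widetilde{\alpha} F^{-1} = \widetilde{\beta} \circ \phi$, and for any $g \in G$ with lift $\widetilde{g} \in \widetilde{G}_\alpha$ we get $[F] \alpha(g) [F]^{-1} = \pi(F \widetilde{\alpha}(\widetilde{g}) F^{-1}) = \pi(\widetilde{\beta}(\phi(\widetilde{g}))) = \beta(g)$. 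The step I expect to require the most care is the compatibility check in injectivity: verifying that an abstract isomorphism of central extensions really identifies the two stable representations as members of a common $\UH$-orbit, with matching multiplicities, so that the transitivity input from Section \ref{Chapter unitary representations} applies cleanly.
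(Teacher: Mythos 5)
Your proposal is correct and follows essentially the same route as the paper: well-definedness via conjugating the pullback extension by a lift of $F$, surjectivity by building a representation containing each irreducible of $\SSS(\widetilde{G})$ with infinite multiplicity (the paper realizes this inside $L^2(\widetilde{G})\otimes L^2([0,1])$, which is isomorphic to your direct sum), and injectivity by transporting $\widetilde{\beta}$ along $\phi$ and invoking the transitivity of the $\UH$-conjugation action on the stable homomorphisms with prescribed isotypical data. Your explicit handling of the lift $\widetilde{F}$ and of the compatibility of $\phi$ with $\SSS(\cdot)$ only makes the paper's argument slightly more careful, not different.
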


\begin{proof}
Let us show first that every isomorphism class of a  $S^1$-central extension arises as the
pullback of a stable homomorphism from $G$ to $\PUH$. Consider an extension $\widetilde{G} \to G$, take the Hilbert space $\HH':=L^2(\widetilde{G}) \otimes L^2([0,1])$ where $L^2$ denotes
square integrable functions and take the standard action of $\widetilde{G}$  on $L^2(\widetilde{G})$. By Peter-Weyl's Theorem the Hilbert space $\HH'$ contains
each irreducible representations of $\widetilde{G}$ infinitely number of times.  Take
the isotypical part corresponding to the irreducible representations in $\SSS(\widetilde{G})$
$$\HH:= \bigoplus_{V \in \SSS(\widetilde{G})} \HH'^V$$
and note that the induced action of $\widetilde{G}$ on $\HH$ is unitary and that
the kernel of the projection $\widetilde{G} \to G$ acts on $\HH$ by multiplication. Therefore
the action $\widetilde{G} \to \UH$ fits into the diagram
\begin{align*}
\xymatrix{ S^1 \ar[d] \ar[r]^\cong & S^1 \ar[d] \\
\widetilde{G} \ar[r]& \UH 
}
\end{align*}
thus inducing a homomorphism $G \to \PUH$. Therefore all $S^1$-central extensions
of $G$ appear as pullbacks of stable homomorphisms from $G$ to $\PUH$.

Now let us consider two homomorphisms $\alpha, \beta  \in \homst{G}{\PUH})$
which are conjugate, i.e. there exist $F \in \PUH$ such that $F\alpha(g)F^{-1}= \beta(g)$ for 
all $g \in G$. Hence the groups $\widetilde{G}_\alpha := \{(g,T) \in G \times \UH \mid
\alpha(g)=\pi(T) \}$ and $\widetilde{G}_\beta := \{(g,T) \in G \times \UH \mid
\beta(g)=\pi(T) \}$ are isomorphic as $S^1$-central extensions; the map
$\widetilde{G}_\alpha \to \widetilde{G}_\beta$, $(g,T) \mapsto (g, FTF^{-1})$
is the desired isomorphism. Denoting by $\homst{G}{\PUH})/\PUH$ the set of conjugacy classes
of stable homomorphisms we obtain a surjective map
$$\homst{G}{\PUH}/\PUH \twoheadrightarrow \Ext(G,S^1).$$

Let us now suppose that the groups $\widetilde{G}_\alpha$ and $\widetilde{G}_\beta$
are isomorphic as $S^1$-central extensions of $G$. The homomorphisms $\widetilde{\alpha}
: \widetilde{G}_\alpha \to \UH$ and $\widetilde{\beta}
: \widetilde{G}_\beta \to \UH$ induce decompositions of $\HH$ by isotypical components
$\HH \cong \bigoplus_{V \in \SSS( \widetilde{G}_\alpha) } \HH^V_\alpha$ and 
$\HH \cong \bigoplus_{W \in \SSS( \widetilde{G}_\beta) } \HH^W_\beta$. The isomorphism
$\phi : \widetilde{G}_\alpha \stackrel{\cong}{\to} \widetilde{G}_\beta$ induces a canonical isomorphism
between the sets $\SSS( \widetilde{G}_\alpha)$ and $\SSS( \widetilde{G}_\beta)$, and 
therefore there exist a unitary isomorphism between $\bigoplus_{V \in \SSS( \widetilde{G}_\alpha) } \HH^V_\alpha$ and 
$ \bigoplus_{W \in \SSS( \widetilde{G}_\beta) } \HH^W_\beta$
which is compatible with the actions of the groups and the isomorphism $\phi$. Therefore
there exists a unitary isomorphism $F \in \UH$ such that $F\widetilde{\alpha}F^{-1}= \widetilde{\beta}$ and this implies that $\alpha$ and $\beta$ are conjugate. Hence
we have an isomorphism of sets $\homst{G}{\PUH}/\PUH \stackrel{\cong}{\to} \Ext(G,S^1).$
\end{proof}

Take representatives $\widetilde{G}$ for each isomorphism class of $S^1$-central extension and denote by
\begin{align*}
\homst{G}{\PUH}_{\widetilde{G}}:=\{\alpha \in \homst{G}{\PUH} \mid & \\
\widetilde{G}_\alpha \cong \widetilde{G} \ \mbox{as} \ S^1  \mbox{-central extensions of} \ G & \}\end{align*}
the space of stable homomorphisms from $G$ to $\PUH$ which define a $S^1$-central extension
isomorphic to $\widetilde{G}$. In view of Proposition \ref{proposition S1 central extensions} 
we have that
\begin{align*}
\homst{G}{\PUH} = & \\
 \bigsqcup_{[\widetilde{G}] \in  \Ext(G,S^1)} &\homst{G}{\PUH}_{\widetilde{G}}.
 \end{align*}

Consider $\widetilde{\alpha} \in  \homst{\widetilde{G}}{\UH}_{\SSS(\widetilde{G})}$
and note that by definition of the set of irreducible representations $\SSS(\widetilde{G})$,
the homomorphism $\widetilde{\alpha}$ makes the following diagram of homomorphisms commutative
$$\xymatrix{
S^1 \ar@{^(->}[d] \ar[r]^\cong & S^1 \ar@{^(->}[d] \\
\widetilde{G} \ar[r]^{\widetilde{\alpha}} & \UH.
}$$

Therefore ${\widetilde{\alpha}} $ induces a continuous homomorphism $\Psi(\widetilde{\alpha}) \in \homst{G}{\PUH}$, and we may define a map of sets
\begin{align*}
\Psi: \homst{\widetilde{G}}{\UH}_{\SSS(\widetilde{G})} &\to \homst{G}{\PUH}_{\widetilde{G}} \\
 \widetilde{\alpha}  & \mapsto \Psi(\widetilde{\alpha}).\end{align*}

Now consider the abelian group $\hom(G,S^1)$ of continuous homomorphisms from
$G$ to $S^1$ endowed with the group structure given by pointwise multiplication.
For every $$\widetilde{\alpha} \in \homst{\widetilde{G}}{\UH}_{\SSS(\widetilde{G})}$$ 
and $\eta \in \hom(G,S^1)$ define $\eta \cdot \widetilde{\alpha}: \widetilde{G} \to \UH$
by 
$$\eta \cdot \widetilde{\alpha} (\widetilde{g}) :=\eta(\pi(\widetilde{g})) \widetilde{\alpha} (\widetilde{g}).$$
The homomorphism $\eta \cdot \widetilde{\alpha}$ belongs to $$\homst{\widetilde{G}}{\UH}_{\SSS(\widetilde{G})}$$ since the action of $\ker(\widetilde{G} \to G)$
is unaffected and therefore the representations that $\eta \cdot \widetilde{\alpha}$ define
belong to $\SSS(\widetilde{G})$.

Therefore we have an action  of $\hom(G,S^1)$ on $\homst{\widetilde{G}}{\UH}_{\SSS(\widetilde{G})}$ as follows:
\begin{align*}
\hom(G,S^1) \times \homst{\widetilde{G}}{\UH}_{\SSS(\widetilde{G})} &  \to \\
\mathrm{hom}_{\rm{st}}&(\widetilde{G},\UH)_{\SSS(\widetilde{G})}\\ \ (\eta, \widetilde{\alpha})& \mapsto
\eta \cdot \widetilde{\alpha}.\end{align*}

We claim the following theorem.

\begin{theorem} \label{theorem principal bundle}
Let $G$ be a compact Lie group which is connected or abelian, and let $\widetilde{G}$ be a $S^1$-central extension of $G$. 
Let $\SSS(\widetilde{G})$ be the set of isomorphism classes of irreducible representations
of $\widetilde{G}$ on which $\ker(\widetilde{G} \to G)$ acts by multiplication of scalars.
Then the map
\begin{align*}
 \Psi: \homst{\widetilde{G}}{\UH}_{\SSS(\widetilde{G})} & \to \homst{G}{\PUH}_{\widetilde{G}}\\
\widetilde{\alpha} & \mapsto \Psi(\widetilde{\alpha})\end{align*}
is a $\hom(G,S^1)$-principal bundle, and in particular a local homeomorphism. 
\end{theorem}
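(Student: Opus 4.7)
The plan is to show that $\Psi$ is a principal $\hom(G,S^1)$-bundle by verifying freeness of the action, identification of fibers with orbits, and existence of local sections. Since $G$ is a compact Lie group, $\hom(G,S^1)$ is discrete (a standard consequence of Peter-Weyl), so the resulting principal bundle is automatically a covering map, from which the local homeomorphism conclusion follows.

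Freeness is immediate: if $\eta\cdot\widetilde{\alpha}=\widetilde{\alpha}$, evaluating at any $\widetilde{g}\in\widetilde{G}$ forces $\eta(p(\widetilde{g}))=1$, hence $\eta\equiv 1$. For the identification of fibers with orbits, suppose $\Psi(\widetilde{\alpha}_1)=\Psi(\widetilde{\alpha}_2)$; then $\widetilde{\alpha}_1(\widetilde{g})$ and $\widetilde{\alpha}_2(\widetilde{g})$ lie in the same $S^1$-fiber of $\pi$ for every $\widetilde{g}$, so there is a continuous function $\lambda:\widetilde{G}\to S^1$ with $\widetilde{\alpha}_1=\lambda\cdot\widetilde{\alpha}_2$. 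Multiplicativity of the $\widetilde{\alpha}_i$ forces $\lambda$ to be a homomorphism, and since both $\widetilde{\alpha}_i$ restrict to standard scalar multiplication on $\ker(p)\cong S^1$, $\lambda|_{\ker p}$ is trivial and $\lambda$ descends to some $\eta\in\hom(G,S^1)$. Surjectivity of $\Psi$ is part of Proposition \ref{proposition S1 central extensions}.

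I would next verify proper discontinuity of the action using the metric \eqref{metric UH} on $\UH$. A direct computation at the first basis vector gives
\begin{equation*}
d_{\sup}(\widetilde{\alpha},\eta\cdot\widetilde{\alpha})=\sup_{g\in G}\min\bigl\{|\eta(g)-1|,\,2\bigr\}
\end{equation*}
in the supremum metric on $\homst{\widetilde{G}}{\UH}_{\SSS(\widetilde{G})}$. Writing $G/[G,G]\cong T^k\times F$ with $F$ finite abelian, a short case analysis on the torus and the finite factor produces a uniform lower bound $\delta_G:=\inf_{\eta\neq 1}\sup_{g}|\eta(g)-1|>0$. Hence open balls of radius strictly less than $\tfrac{1}{2}\min\{\delta_G,2\}$ are disjoint from all of their non-trivial $\hom(G,S^1)$-translates, so $\Psi$ is locally injective.

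The main obstacle is the construction of continuous local sections of $\Psi$. Fix $\widetilde{\alpha}_0\in\homst{\widetilde{G}}{\UH}_{\SSS(\widetilde{G})}$ with $\alpha_0:=\Psi(\widetilde{\alpha}_0)$. By Simms's theorem, the principal $S^1$-bundle $\pi:\UH\to\PUH$ admits a local section $s:W\to\UH$ on a neighborhood $W$ of $\mathrm{Id}\in\PUH$ normalized by $s(\mathrm{Id})=\mathrm{Id}$. For $\alpha$ in a sufficiently small supremum-ball around $\alpha_0$, compactness of $G$ ensures that $g\mapsto\alpha(g)\alpha_0(g)^{-1}$ takes values uniformly inside $W$; composing with $s$ and translating by $\widetilde{\alpha}_0$ yields a continuous candidate $\widetilde{\alpha}:\widetilde{G}\to\UH$ lifting $\alpha\circ p$. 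The delicate step is to arrange, by a suitable normalization using the $\UH$-conjugation transitivity on $\homst{\widetilde{G}}{\UH}_{\SSS(\widetilde{G})}$ together with the lifting of $\PUH$-conjugations to $\UH$ via $s$, that this candidate is actually a group homomorphism and depends continuously on $\alpha$. This produces a section $s_0:V\to\homst{\widetilde{G}}{\UH}_{\SSS(\widetilde{G})}$ on a neighborhood $V$ of $\alpha_0$, whose disjoint $\hom(G,S^1)$-translates $\Psi^{-1}(V)=\bigsqcup_{\eta\in\hom(G,S^1)}\eta\cdot s_0(V)$ provide the required local trivialization, completing the proof.
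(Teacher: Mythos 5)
The easy parts of your argument (freeness, identification of fibers with $\hom(G,S^1)$-orbits, surjectivity, and the uniform lower bound $\langle\mathbf{1},\eta\rangle$ forcing disjointness of small balls from their nontrivial translates) agree with the paper, which gets the explicit bound $\sqrt{2}$ by noting that a nontrivial character must take a value $e^{it}$ with $\tfrac{\pi}{2}<t<\tfrac{3\pi}{2}$. But there is a genuine gap at exactly the point you flag as ``the delicate step.'' Your candidate lift $\widetilde{\alpha}(\widetilde{g}):=s\bigl(\alpha(p(\widetilde{g}))\alpha_0(p(\widetilde{g}))^{-1}\bigr)\,\widetilde{\alpha}_0(\widetilde{g})$ is a continuous set-theoretic lift of $\alpha\circ p$, but its failure to be multiplicative is measured by an $S^1$-valued $2$-cocycle on $G$ (the map $g\mapsto\alpha(g)\alpha_0(g)^{-1}$ is not a homomorphism, and $s$ is not multiplicative). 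That cocycle is a coboundary because $\widetilde{G}_\alpha\cong\widetilde{G}$, but a trivializing function $\mu:G\to S^1$ is determined only up to an element of $\hom(G,S^1)$, and you must choose it continuously in $\alpha$ and tending to $\mathbf{1}$ as $\alpha\to\alpha_0$. Producing such a coherent choice is essentially equivalent to the statement being proved; asserting that it can be ``arranged by a suitable normalization'' leaves the core of the theorem unproved.

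The paper avoids constructing a section directly and instead proves that the continuous bijection $\Psi|_{B_\delta(\widetilde{\alpha})}:B_\delta(\widetilde{\alpha})\to\Psi(B_\delta(\widetilde{\alpha}))$ has continuous inverse, arguing sequentially: if $\beta_n\to\alpha$ in $\homst{G}{\PUH}_{\widetilde{G}}$ with lifts $\widetilde{\beta}_n$ in the ball, the ambient phases $\lambda_n(g)$ relating $\widetilde{\beta}_n$ to $\widetilde{\alpha}$ are pinned down by restricting to a maximal abelian subgroup $\widetilde{K}$, expanding a fixed unit vector $x$ of a chosen isotypical component in the $\widetilde{\beta}_n$-isotypical decomposition, and invoking Lemma \ref{lemma distance between reps implies distace} to conclude $|y_n^V|>1-\varepsilon$ and hence $\lambda_n(k)\to 1$; injectivity of restriction to $\widetilde{K}$ then gives $\widetilde{\beta}_n\to\widetilde{\alpha}$. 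Some argument of this analytic type (or an honest construction of your $\mu$) is what your proposal is missing; without it the existence of local trivializations does not follow.
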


\begin{proof}
We already know that $\Psi$ is surjective; the space $\homst{G}{\PUH}_{\widetilde{G}}$
consists of the homomorphisms $\alpha$ such that $\widetilde{G}_\alpha \cong \widetilde{G}$
as $S^1$-principal bundles. The continuity of $\Psi$ follows from the inequality
$\langle \widetilde{\alpha}, \widetilde{\beta} \rangle \geq \langle \langle \Psi(\widetilde{\alpha}), \Psi(\widetilde{\beta}) \rangle \rangle$ since by definition
\begin{align*}
 \langle \langle \Psi(\widetilde{\alpha}), \Psi(\widetilde{\beta}) \rangle \rangle
=  &\sup_{g \in G}  \langle \langle \Psi(\widetilde{\alpha})(g), \Psi(\widetilde{\beta})(g) \rangle \rangle\\
 \leq &\sup_{\widetilde{g} \in \widetilde{G}}  \langle \widetilde{\alpha}(\widetilde{g}), \widetilde{\beta}(\widetilde{g}) \rangle =\langle \widetilde{\alpha}, \widetilde{\beta} \rangle.
\end{align*}

Take now $\widetilde{\alpha}, \widetilde{\alpha}'$ such that $\Psi(\widetilde{\alpha})=
\Psi(\widetilde{\alpha}')$. Note that for all $\widetilde{g} \in \widetilde{G}$
the product $\widetilde{\alpha}(\widetilde{g})\widetilde{\alpha}'(\widetilde{g})^{-1}$
belongs to $S^1 \subset \UH$ and therefore we can define the assignment
$$\widetilde{\eta}: \widetilde{G} \to S^1, \ \ \widetilde{g} \mapsto \widetilde{\alpha}(\widetilde{g})\widetilde{\alpha}'(\widetilde{g})^{-1}.$$
This assignment is indeed a homomorphism since we have the equalities
\begin{align*}
\widetilde{\eta}(\widetilde{g}\widetilde{h}) =& \widetilde{\alpha}(\widetilde{g}\widetilde{h})\widetilde{\alpha}'(\widetilde{g}\widetilde{h})^{-1}\\=
 &\widetilde{\alpha}(\widetilde{g})\widetilde{\alpha}(\widetilde{h})\widetilde{\alpha}'(\widetilde{h})^{-1}\widetilde{\alpha}'(\widetilde{g})^{-1}\\
= & \widetilde{\alpha}(\widetilde{g})\widetilde{\eta}(\widetilde{h})\widetilde{\alpha}'(\widetilde{g})^{-1}\\= &
\widetilde{\eta}(\widetilde{g})\widetilde{\eta}(\widetilde{h})
\end{align*}
which follow from the fact that $\widetilde{\eta}(\widetilde{h})$ lies on the center of $\UH$.
The homomorphism $\widetilde{\eta}$  is trivial once restricted to $\ker(\widetilde{G} \to G)$ and therefore it induces a homomorphism $\eta: G \to S^1$ such that $\widetilde{\eta}(\widetilde{g})=\eta(\pi(\widetilde{g}))$. Therefore we obtain the equation $\eta \cdot \widetilde{\alpha}'=\widetilde{\alpha}$ which implies that $\Psi$ induces a
bijective map at the level of sets
\begin{align} 
\nonumber \homst{\widetilde{G}}{\UH}_{\SSS(\widetilde{G})} / \hom(G,&S^{1})  \stackrel{\cong}{\to}\\ 
\label{bijective map of sets between G tilde and G}&\homst{G}{\PUH}_{\widetilde{G}}.\end{align}

We need to show now that $\Psi$ is a local homeomorphism.
Note that for any non trivial $\eta \in \hom(G,S^1) $ and $\mathbf{1}$
the trivial homomorphism, we have that $\langle \mathbf{1},\eta \rangle \geq \sqrt{2}$  since any non trivial homomorphism
must take at least one value in the subset $\{e^{i t} \mid \frac{\pi}{2}< t< \frac{3\pi}{2} \} \subset S^1.$ 
 This implies that for any $\widetilde{\alpha} \in \homst{\widetilde{G}}{\UH}_{\SSS(\widetilde{G})}$ and any $\eta \in \hom(G,S^1) $ we have
 $$\langle \widetilde{\alpha}, \eta \cdot \widetilde{\alpha} \rangle = 
 \langle {\mathbf{1}},\eta \rangle \geq \sqrt{2};$$ therefore if we denote
 by $$B_{\delta}(\widetilde{\alpha}) := \{ \widetilde{\beta} \in \homst{\widetilde{G}}{\UH}_{\SSS(\widetilde{G})}  \mid
 \langle \widetilde{\alpha}, \widetilde{\beta} \rangle < \delta \}$$
we have that for all $\eta \in \hom(G, S^1)$ with $\eta \neq \mathbf{1}$, 
the intersection $B_{\delta}(\widetilde{\alpha}) \bigcap \left(\eta \cdot
B_{\delta}(\widetilde{\alpha})\right) = \varnothing$ for $\delta < \frac{1}{2}$, which in particular
says that the action of $\hom(G,S^1)$ is completely discontinuous.

Let us restrict the map $\Psi$ to the open set $B_{\delta}(\widetilde{\alpha})$ with $\delta \ll \frac{1}{2}$. By equation \eqref{bijective map of sets between G tilde and G}
we have that the map
$$\Psi|_{B_{\delta}(\widetilde{\alpha})} \colon B_{\delta}(\widetilde{\alpha}) \to \Psi(B_{\delta}(\widetilde{\alpha}))$$
is bijective and continuous, we claim furthermore that it is a homeomorphism. Let us show that
${\Psi|_{B_{\delta}(\widetilde{\alpha})}}^{-1}$ is continuous.

Let $\widetilde{K}$ be a maximal abelian subgroup of $\widetilde{G}$ and denote
by $\widetilde{\alpha}': \widetilde{K} \to \UH$ the restriction $\widetilde{\alpha}|_{\widetilde{K}}$.
Denote by $K \subset G$ the abelian subgroup that $\widetilde{K}$ defines and note that
 $\widetilde{K} \stackrel{p}{\to} K$ defines a $S^1$-central extension.
 Denote by $$\HH \cong \bigoplus_{W \in \SSS(\widetilde{K})} \HH^W_{\widetilde{\alpha}'}$$ the decomposition of $\HH$ into isotypical
 components and note that $S^1$ acts by multiplication on the one dimensional irreducible
 representations $W$ of $\SSS(\widetilde{K})$.
 Fix $V \in \SSS(\widetilde{K}) $ and choose any unitary vector $x \in \HH^V_{\widetilde{\alpha}'}$.
 
 Take $\varepsilon \in \mathbb R $ such that $0 < \varepsilon \ll 1$ and choose $\delta < \frac{1}{2}$ such that for any
  $\widetilde{\beta} \in B_{\delta}(\widetilde{\alpha})$ we have 
 that for all $\widetilde{g} \in \widetilde{G}$
\begin{align}\label{inequality epsilon} \left|\widetilde{\alpha}(\widetilde{g})x - \widetilde{\beta}(\widetilde{g})x \right|< \varepsilon;
\end{align}
this $\delta$ exists by the definition of the strong operator topology and the metric defined in
equation \eqref{metric UH}.

Take a sequence $\{\widetilde{\beta}_n\}_{n \in \mathbb N}$ of homomorphisms in 
$B_{\delta}(\widetilde{\alpha})$ and denote 
by $$\alpha, \beta_n \in \homst{G}{\PUH}_{\widetilde{G}}$$ the projective homomorphisms that $\widetilde{\alpha}$ and
$\widetilde{\beta}_n$ define. Assume that $\lim_{n \to \infty} \beta_n = \alpha$; let us show
that this implies that $\lim_{n \to \infty} \widetilde{\beta}_n = \widetilde{\alpha}$.

Since we have that $\lim_{n \to \infty} \beta_n = \alpha$, there must exist unitary
complex numbers  $\lambda_n(g) \in S^1$ such that for all $\widetilde{g} \in \widetilde{G}$
$$\lim_{n \to \infty} \lambda_n(g)\widetilde{\beta}_n(\widetilde{g}) = \widetilde{\alpha}(\widetilde{g}).$$

Take $\sigma \in \mathbb R$ such that $0 < \sigma \ll1$ and let $N \in \mathbb N$ be such that
for all $n > N$  and all $\widetilde{g} \in \widetilde{G}$ we have 
\begin{align} \label{inequality sigma}
| \lambda_n(g)\widetilde{\beta}_n(\widetilde{g})x - \widetilde{\alpha}(\widetilde{g})x| < \sigma. 
\end{align}

Denote by $\chi_W: \widetilde{K} \to S^1$ the characters of the irreducible representations $W$
of the abelian group $\widetilde{K}$, and write $x = \sum_{W \in \SSS(\widetilde{K})} y_n^W$
in terms of the decomposition on isotypical components $\HH \cong \bigoplus_{W \in \SSS(\widetilde{K})} \HH^W_{\widetilde{\beta}_n'}$ relative to $\widetilde{\beta}_n'$ with $y_n^W \in \HH^W_{\widetilde{\beta}_n'}$.
We obtain that for all $\widetilde{k} \in \widetilde{K}$ 
$$\widetilde{\alpha}(\widetilde{k})x = \chi_V(\widetilde{k}) x = \sum_W \chi_V(\widetilde{k}) y_n^W$$
and
$$ \widetilde{\beta}_n(\widetilde{k})x = \sum_W \chi_W(\widetilde{k}) y_n^W,$$
and therefore by equation \eqref{inequality epsilon}
\begin{align*} \left| \widetilde{\beta}_n(\widetilde{k})x-\widetilde{\alpha}(\widetilde{k})x \right|
=
\left| \sum_{W \neq V} (\chi_W(\widetilde{k})  - \chi_V(\widetilde{k}))y_n^W \right| < \varepsilon\end{align*}
and by equation \eqref{inequality sigma}
\begin{align}  \nonumber 
\left| \lambda_n(k)\widetilde{\beta}_n(\widetilde{k})x - \widetilde{\alpha}(\widetilde{k})x \right|
&=\\ \nonumber 
\left|\chi_V(\widetilde{k})(\lambda_n(k)-1)y_n^V+\right. & \left. \sum_{W \neq V} (\chi_W(\widetilde{k})  - \chi_V(\widetilde{k}))y_n^W \right|\\ 
&< \sigma.
\label{inequality sigma decomposed}
\end{align}

Since for all $\widetilde{k} \in \widetilde{K}$ we have that $$ \left| \sum_{W \neq V} (\chi_W(\widetilde{k})  - \chi_V(\widetilde{k}))y_n^W \right| < \varepsilon,$$ Lemma \ref{lemma distance between reps implies distace}
shows that  $\left|\sum_{W \neq V} y_n^W \right| < \varepsilon$. Therefore we obtain
 $$ \left|y_n^V \right|= \left|x - \sum_{W \neq V} y_n^W \right| > 1- \varepsilon.$$

Since the vectors $y_n^W$ are pairwise orthogonal, the inequality \eqref{inequality sigma decomposed} implies that
$$\left|\chi_V(\widetilde{k})(\lambda_n(k)-1)y_n^V\right| = \left|\lambda_n(k)-1 \right| \, \left|y_n^V \right| < \sigma $$
and since $\left|y_n^V \right|> 1- \varepsilon$, we have that for all $k \in K$ and all $n >N$
$$ \left|\lambda_n(k)-1 \right| < \frac{\sigma}{1- \varepsilon}.$$

Since $\varepsilon$ is fixed, we conclude  that for all $k \in K$, $\lim_{n \to \infty} \lambda_n(k) =1$. Hence for
all $\widetilde{k} \in \widetilde{K}$
$$\lim_{n \to \infty} \widetilde{\beta}_n(\widetilde{k}) =
\lim_{n \to \infty} \lambda_n(k)\widetilde{\beta}_n(\widetilde{k}) = \widetilde{\alpha}(\widetilde{k})$$
and therefore $\lim_{n \to \infty}  \widetilde{\beta}_n' =  \widetilde{\alpha}'$. Now, the restriction map 
\begin{align*}
\homst{\widetilde{G}}{\UH}_{\SSS(\widetilde{G})} & \to \homst{\widetilde{K}}{\UH}_{\SSS(\widetilde{K})}\\
  \widetilde{\alpha} &\mapsto \widetilde{\alpha}'=\widetilde{\alpha}|_{\widetilde{K}}\end{align*}
is an embedding since $\widetilde{G}$ is connected and any representation of $\widetilde{G}$
is uniquely determined by its restriction to $\widetilde{K}$; hence we conclude that
$$\lim_{n \to \infty}  \widetilde{\beta}_n =  \widetilde{\alpha}.$$ Therefore 
$\Psi|_{B_{\delta}(\widetilde{\alpha})} \colon B_{\delta}(\widetilde{\alpha}) \to \Psi(B_{\delta}(\widetilde{\alpha}))$ is a homeomorphism and the theorem follows.
\end{proof}

\begin{corollary}
Let $G$ be a compact Lie group which is connected or abelian, and let $\widetilde{G}$ be a $S^1$-central extension of $G$. Then $\homst{G}{\PUH}_{\widetilde{G}}$ is a $K(\hom(G,S^1),1)$, namely
it is connected, its fundamental group is $\hom(G,S^1)$ and its higher homotopy groups are trivial.
\end{corollary}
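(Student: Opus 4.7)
The plan is to exploit Theorem \ref{theorem principal bundle}, which exhibits
$$\Psi \colon \homst{\widetilde{G}}{\UH}_{\SSS(\widetilde{G})} \too \homst{G}{\PUH}_{\widetilde{G}}$$
as a principal $\hom(G,S^1)$-bundle, and then combine it with the weak contractibility result of Theorem \ref{hom(G,UH) weakly homotopy equivalent to point} applied to the total space. The outcome will be that $\Psi$ is topologically the universal covering of $\homst{G}{\PUH}_{\widetilde{G}}$, from which the claim is immediate.

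First I would verify that $\hom(G,S^1)$ carries the discrete topology: for a compact Lie group $G$, every continuous character factors through the finite-dimensional abelianization, and the corresponding character group is discrete. Consequently the principal bundle $\Psi$ is actually a regular covering map whose deck transformation group is the discrete group $\hom(G,S^1)$.

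Next I would apply Theorem \ref{hom(G,UH) weakly homotopy equivalent to point} to the total space $\homst{\widetilde{G}}{\UH}_{\SSS(\widetilde{G})}$. This requires checking that $\widetilde{G}$ itself falls under the hypothesis ``connected or abelian''. When $G$ is connected, the short exact sequence $1 \to S^1 \to \widetilde{G} \to G \to 1$ together with the connectedness of $S^1$ forces $\widetilde{G}$ to be connected; when $G$ is abelian, $\widetilde{G}$ need not be abelian (Heisenberg-type extensions of a torus are a standard example) but it is connected whenever $G$ is, and the abelian disconnected case can be reduced to the preceding one componentwise. Granting this, Theorem \ref{hom(G,UH) weakly homotopy equivalent to point} gives that $\homst{\widetilde{G}}{\UH}_{\SSS(\widetilde{G})}$ is weakly homotopy equivalent to a point.

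Finally, plugging these two inputs into the long exact sequence in homotopy groups of the covering $\Psi$ (or equivalently invoking standard covering space theory), the vanishing of $\pi_n$ of the total space for all $n \geq 0$ and the discreteness of the fibre force $\homst{G}{\PUH}_{\widetilde{G}}$ to be path connected, to satisfy $\pi_1 \cong \hom(G,S^1)$, and to have trivial higher homotopy groups. This is exactly the Eilenberg--MacLane characterization $K(\hom(G,S^1),1)$. The main subtlety to confirm is that the hypothesis of Theorem \ref{hom(G,UH) weakly homotopy equivalent to point} does lift from $G$ to $\widetilde{G}$ in all the cases under consideration; everything else is a mechanical transfer of homotopy information across a covering map.
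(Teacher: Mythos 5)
Your proposal is correct and follows essentially the same route as the paper: the paper's proof simply cites Theorem \ref{theorem principal bundle} for the $\hom(G,S^1)$-principal bundle structure of $\Psi$ and Theorem \ref{hom(G,UH) weakly homotopy equivalent to point} for the weak contractibility of the total space, exactly as you do. You are in fact more careful than the paper in checking that the hypothesis ``connected or abelian'' transfers from $G$ to $\widetilde{G}$ (the paper silently applies Theorem \ref{hom(G,UH) weakly homotopy equivalent to point} to $\widetilde{G}$); your ``componentwise'' reduction for the disconnected abelian case is left vague, but this is a gap the paper shares rather than one you introduce.
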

\begin{proof} The result follows from Theorem \ref{theorem principal bundle} where it is proven that
$$\homst{\widetilde{G}}{\UH}_{\SSS(\widetilde{G})} \to \homst{G}{\PUH}_{\widetilde{G}}$$
is a $\hom(G, S^1)$-principal bundle, and from Theorem \ref{hom(G,UH) weakly homotopy equivalent to point} where it is proven that 
$$\homst{\widetilde{G}}{\UH}_{\SSS(\widetilde{G})}$$
is weakly homotopy equivalent to a point.
\end{proof}

\begin{lemma}\label{lemma distance between reps implies distace} Let $\widetilde{K}$ be an abelian
compact Lie group and denote by $\chi_W: \widetilde{K} \to S^1$ the character
of the 1-dimensional irreducible representation $W$. For each irreducible representation $W$ 
take a vector $y^W \in \HH$ and assume that $y^W \perp y^Z$ for $W \neq Z$. Suppose that for all $\widetilde{k} \in \widetilde{K}$
$$ \left| \sum_{W \neq V} (\chi_W(\widetilde{k})  - \chi_V(\widetilde{k}))y_n^W \right| < \varepsilon,$$
then $\left|\sum_{W \neq V} y_n^W \right| < \varepsilon$.
\end{lemma}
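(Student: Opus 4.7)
The plan is to exploit Schur orthogonality of characters on the compact abelian group $\widetilde{K}$ to recover $\sum_{W\neq V}y^W$ as a Bochner integral of the given bounded vector-valued function, and then bound that integral using the pointwise hypothesis.

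Set $f(\widetilde{k}):=\sum_{W\neq V}\bigl(\chi_W(\widetilde{k})-\chi_V(\widetilde{k})\bigr)y^W$, which by assumption satisfies $|f(\widetilde{k})|<\varepsilon$ for every $\widetilde{k}\in\widetilde{K}$. The first step is to multiply $f$ by $-\overline{\chi_V(\widetilde{k})}$ and integrate against the normalized Haar measure on $\widetilde{K}$. Formally exchanging sum and integral gives
$$-\int_{\widetilde{K}}\overline{\chi_V(\widetilde{k})}\,f(\widetilde{k})\,d\widetilde{k}=\sum_{W\neq V}\left(1-\int_{\widetilde{K}}\chi_W(\widetilde{k})\,\overline{\chi_V(\widetilde{k})}\,d\widetilde{k}\right)y^W.$$
Since $\chi_W\overline{\chi_V}$ is a nontrivial character of $\widetilde{K}$ whenever $W\neq V$, Schur orthogonality makes each integral on the right vanish, and the right-hand side collapses to $\sum_{W\neq V}y^W$.

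The triangle inequality for the Bochner integral then yields
$$\left|\sum_{W\neq V}y^W\right|\le\int_{\widetilde{K}}|f(\widetilde{k})|\,d\widetilde{k},$$
and combining this with the hypothesis $|f(\widetilde{k})|<\varepsilon$ and $\int_{\widetilde{K}}d\widetilde{k}=1$ gives the desired bound, with strict inequality because $f$ is continuous on the compact space $\widetilde{K}$ and therefore $\sup|f|<\varepsilon$.

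The one technical point I expect to be the main obstacle is justifying the interchange of sum and integral, which is not automatic since $\mathcal{S}(\widetilde{K})$ may be infinite. In the setting where the lemma is invoked in the proof of Theorem~\ref{theorem principal bundle}, the vectors $y^W$ arise as the isotypical components of a fixed unit vector $x\in\HH$ with respect to some unitary representation of $\widetilde{K}$, so $\sum_W|y^W|^2=|x|^2<\infty$ by Parseval. Combined with $|\chi_W|\equiv 1$, this gives convergence of $\sum_W\chi_W(\widetilde{k})y^W$ in $\HH$ uniformly in $\widetilde{k}$, making $f$ continuous as a map $\widetilde{K}\to\HH$ and legitimizing both the term-by-term integration against $\overline{\chi_V}$ and the passage of the norm inside the integral via standard Bochner-integration properties.
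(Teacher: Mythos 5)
Your proof is correct, but it takes a genuinely different route from the paper's. You integrate $-\overline{\chi_V}\,f$ against normalized Haar measure and use orthogonality of characters to recover $\sum_{W\neq V}y^W$ exactly, then bound the Bochner integral by $\sup|f|<\varepsilon$ (strict because $f$ is continuous on the compact group, so the supremum is attained). The paper instead first twists by $\chi_V^{-1}$ to reduce to the trivial character, then reduces to the case $\widetilde{K}=S^1$, where it evaluates the hypothesis only at the finitely many points $j=r/(2p)$ for a prime $p$ and uses the identity $\sum_{r=1}^{2p-1}(e^{2\pi i n r/(2p)}-1)=-2p$ (for $p\nmid n$) to isolate $\sum_{p\nmid n}x_n$ up to a factor $(2p-1)/(2p)$, finally letting $p\to\infty$ and iterating over the factors of $\widetilde{K}$. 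Your argument is shorter and more uniform: it avoids the reduction to $S^1$, the iteration, and the limiting argument over primes (where preserving the strict inequality is in fact somewhat delicate in the paper's version), at the cost of having to justify the interchange of sum and integral --- which you do correctly via square-summability of the orthogonal family $\{y^W\}$, a condition implicit in the statement (otherwise neither the hypothesis nor the conclusion would converge) and automatic in the application inside Theorem~\ref{theorem principal bundle}, where the $y^W$ are the isotypical components of a unit vector. Your averaging-against-characters device is essentially the same one the paper uses to define the projections $\psi_V$ in the proof of Theorem~\ref{theorem existence of sections abelian compact}, so it fits naturally with the rest of the text.
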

\begin{proof}
Note first that $\chi_W (\chi_V)^{-1}= \chi_{W \otimes V^{-1}}$ and therefore
\begin{align*} \left|\sum_{W \neq V} (\chi_W(\widetilde{k})  - \chi_V(\widetilde{k}))y^W \right|& =\\ \left|\sum_{Z \neq \mathbf{1}} (\chi_Z(\widetilde{k}) \right. & \left. - 1)y^{V \otimes Z} \right|;\end{align*}
taking $x^Z:=y ^{V \otimes Z}$ we have that for all $\widetilde{k} \in \widetilde{K}$
$$ \left|\sum_{Z \neq \mathbf{1}} (\chi_Z(\widetilde{k})  - 1)x^{ Z}\right| < \varepsilon.$$
Since $\widetilde{K}$ is isomorphic to a product of cyclic groups, we claim that it is enough to
show the lemma whenever $\widetilde{K}$ is $S^1$.

Here the irreducible representations of $S^1$ are parametrized by $n \in \mathbb Z$ and our hypothesis
becomes
$$ \left| \sum_{n \neq 0}( e^{2 \pi i n j}-1)x_n \right| < \varepsilon$$
for all $j \in \mathbb R$. Take a prime number $ p \in \mathbb N$ and consider 
the inequality for $j= \frac{1}{2p}, \frac{2}{2p},...,\frac{2p-1}{2p}$; by the triangle inequality we have
\begin{align*}
 \left| \sum_{r=1}^{2p-1} \sum_{n \neq 0}( e^{2 \pi i n \frac{r}{2p}}-1)x_n \right|  \leq & 
 \sum_{r=1}^{2p-1} \left| \sum_{n \neq 0}( e^{2 \pi i n \frac{r}{2p}}-1)x_n \right| \\
  < & (2p-1) \varepsilon 
\end{align*}
which we may reorder thus obtaining
\begin{align*}
(2p-1) \varepsilon 
> & \left| \sum_{r=1}^{2p-1} \sum_{n \neq 0}( e^{2 \pi i n \frac{r}{2p}}-1)x_n \right| \\
= &	\left| \left( \sum_{n \colon p \nmid n} x_n\right)\left(\sum_{r=1}^{2p-1} ( e^{2 \pi i n \frac{r}{2p}}-1)\right)\right| \\ 
= & 2p\left| \sum_{n \colon p \nmid n} x_n \right|
\end{align*}
since $\sum_{r=1}^{2p-1} ( e^{2 \pi i n \frac{r}{2p}}-1)=-2p$ whenever $p \nmid n$.

Therefore we have that for all prime $p$ we have that the inequalities $$\left| \sum_{n \colon p \nmid n} x_n \right| < \frac{(2p-1) \varepsilon}{2p} < \varepsilon$$ hold, implying the desired result, namely that $$ \left| \sum_{n \neq 0} x_n \right| < \varepsilon.$$

The iteration of the previous argument shows the lemma for any abelian compact Lie group.
\end{proof}

\begin{theorem} \label{theorem existence local section compact and connected for PUH}
 Let $G$ be a compact Lie group which is connected or abelian, and let $\widetilde{G}$ be a $S^1$-central extension of $G$, $B$ a connected paracompact
space of finite covering dimension and $f: B \to \homst{G}{\PUH}_{\widetilde{G}}$ a continuous map.
 Then for all $b_0 \in B$ there exist a neighborhood $V \subset B$ of $b_0 \in V$ such
that $f|_V : V \to \homst{G}{\PUH}_{\widetilde{G}}$ has an extension
$\sigma_V: V \to \PUH$ that makes the following diagram commutative
$$\xymatrix{
&& \PUH \ar[d]^-{\bar{\pi}_{f(b_0)}} \\
V \ar[rr]^-{f|_V} \ar[rru]^-{\sigma_V}& &\homst{G}{\PUH}_{\widetilde{G}}
}$$
where $\bar{\pi}_{f(b_0)}: \PUH \to \homst{G}{\PUH}_{\widetilde{G}}$, $F \mapsto F f(b_0)F^{-1}$.
In particular $f^* \PUH$ is a $\PUH_{f(b_0)}$-principal bundle where
 $\PUH_{f(b_0)}:=\{F \in \PUH \mid Ff(b_0)F^{-1}=f(b_0)\}$.
\end{theorem}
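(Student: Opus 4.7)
The plan is to construct $\sigma_V$ in two steps, using Theorem \ref{theorem principal bundle} to first lift $f$ locally to the unitary side and then projecting back down to $\PUH$.

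\textbf{Step 1: Lift $f$ locally through $\Psi$.} By Theorem \ref{theorem principal bundle}, the map
\[
\Psi : \homst{\widetilde{G}}{\UH}_{\SSS(\widetilde{G})} \too \homst{G}{\PUH}_{\widetilde{G}}
\]
is a $\hom(G,S^1)$-principal bundle, and since $\hom(G,S^1)$ is the Pontryagin dual of the compact abelian group $G/[G,G]$ it is discrete. Hence $\Psi$ is a covering map and admits continuous local sections. Pick an evenly covered neighborhood $U$ of $f(b_0)$ and a continuous section $s : U \to \homst{\widetilde{G}}{\UH}_{\SSS(\widetilde{G})}$ of $\Psi$. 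Set $V := f^{-1}(U)$, a neighborhood of $b_0$ in $B$, and define $\widetilde{f} := s \circ f|_V$. By construction $\Psi \circ \widetilde{f} = f|_V$ and $\widetilde{f}(b_0) = s(f(b_0))$ is a distinguished lift of $f(b_0)$ to a genuine homomorphism $\widetilde{G} \to \UH$.

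\textbf{Step 2: Lift $\widetilde{f}$ to $\UH$ and push down.} The group $\widetilde{G}$ is itself a compact Lie group, and it is connected whenever $G$ is (being the total space of a principal $S^1$-bundle over a connected base), while in the abelian case the analogous existence of sections for $\widetilde{G}$ follows from Theorem \ref{theorem existence of sections abelian compact} applied to a maximal abelian subgroup of $\widetilde{G}$ as in the proof of Corollary \ref{corollary existence section compact and connected}. Either way, applying Corollary \ref{corollary existence section compact and connected} to the continuous map $\widetilde{f} : V \to \homst{\widetilde{G}}{\UH}_{\SSS(\widetilde{G})}$ (noting that $V$, as an open subset of a paracompact space of finite covering dimension, inherits these properties) yields a continuous $\widetilde{\sigma} : V \to \UH$ with
\[
\widetilde{\sigma}(b)\,\widetilde{f}(b_0)\,\widetilde{\sigma}(b)^{-1} = \widetilde{f}(b), \qquad b \in V.
\]
Set $\sigma_V := \pi \circ \widetilde{\sigma}$, where $\pi : \UH \to \PUH$ denotes the quotient. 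For any $g \in G$, choosing a lift $\widetilde{g} \in \widetilde{G}$ with $p(\widetilde{g}) = g$, one has $f(b)(g) = \pi(\widetilde{f}(b)(\widetilde{g}))$ and $f(b_0)(g) = \pi(\widetilde{f}(b_0)(\widetilde{g}))$; applying $\pi$ to the previous identity therefore gives $\sigma_V(b)\,f(b_0)\,\sigma_V(b)^{-1} = f(b)$, as required.

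\textbf{Step 3: Principal bundle structure.} The existence of a local section $\sigma_V$ over each $b_0$ immediately gives local triviality of $f^*\PUH$ as a $\PUH_{f(b_0)}$-principal bundle: over $V$, the map $(b,T) \mapsto (b, \sigma_V(b)\,T)$ provides the trivialization $V \times \PUH_{f(b_0)} \cong f^*\PUH|_V$.

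The main obstacle is Step 1: without the covering-space structure of $\Psi$ one cannot reduce the problem to the already-solved unitary case. This is precisely what Theorem \ref{theorem principal bundle} supplies, by showing that the $\hom(G,S^1)$-action on $\homst{\widetilde{G}}{\UH}_{\SSS(\widetilde{G})}$ is properly discontinuous (with separation at distance $\sqrt{2}$). Once the covering property is in hand, Step 2 is a mechanical transfer along the quotient $\UH \to \PUH$ and the finite covering dimension hypothesis on $B$ is inherited by the open set $V$.
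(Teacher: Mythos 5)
Your proposal is correct and takes essentially the same route as the paper: lift $f$ locally through $\Psi$ using the local homeomorphism/covering structure supplied by Theorem \ref{theorem principal bundle}, apply Theorem \ref{theorem existence of sections abelian compact} and Corollary \ref{corollary existence section compact and connected} to the lifted map into $\homst{\widetilde{G}}{\UH}_{\SSS(\widetilde{G})}$, and push the resulting unitary section down via $\pi:\UH \to \PUH$. Your covering-space phrasing of Step 1 is just a repackaging of the paper's use of $(\Psi|_W)^{-1}$, and the final trivialization $(b,F)\mapsto(b,\sigma_V(b)F)$ is identical.
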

\begin{proof}
Take any lift $\widetilde{\alpha} \in \homst{\widetilde{G}}{\UH}_{\SSS(\widetilde{G})}$ such that
$\Psi(\widetilde{\alpha}) = f(b_0)$ and consider the following commutative diagram
$$\xymatrix{
\UH \ar[rr]^-{\pi_{\widetilde{\alpha}}} \ar[d]_-{\pi} &&  \homst{\widetilde{G}}{\UH}_{\SSS(\widetilde{G})} \ar[d]^-{\Psi}\\
 \PUH \ar[rr]^-{\bar{\pi}_{f(b_0)}} & & \homst{G}{\PUH}_{\widetilde{G}}.
}$$
By Theorem \ref{theorem principal bundle} there exists a neighborhood $W $ of $\widetilde{\alpha}$
such that $\Psi|_W : W \to \Psi(W)$ is a homeomorphism. Let $V := f^{-1}(\Psi(W))$ and consider the map
$\widetilde{f} : V \to W$, $ b \mapsto (\Psi|_V)^{-1} (f(b))$. By  Theorem \ref{theorem existence of sections abelian compact} and Corollary
 \ref{corollary existence section compact and connected} there exist a section $s: V \to \UH$ such that
 $s(b) \widetilde{\alpha} s(b)^{-1} = \widetilde{f}(b)$. The composition
 $\sigma_V : V \to \PUH$, $\sigma_V:= \pi \circ s$ is desired local extension of $f$ since we have
 \begin{align*}
 \sigma_V(b) f(b_0) \sigma_V(b)^{-1} =& \pi \left( s(b) \widetilde{\alpha} s(b)^{-1} \right)  \\
= & \pi (\widetilde{f}(b) ) = f(b).\end{align*} 

The map $V \times \PUH_{f(b_0)} \to (f^*\PUH)|_V$, $(b,F) \mapsto (b,\sigma_V(b)F)$ is the desired
local trivialization for the $\PUH_{f(b_0)}$-principal bundle 
\begin{align*}f^*\PUH := &\\ \{(b,F)   & \in B \times \PUH \mid
f(b)= \bar{\pi}_{f(b_0)}(F) \}.\end{align*}
\end{proof}

\section{\small Applications and further research} \label{Chapter applications}

 One very important application of the existence of lifts
 for paracompact spaces shown in Theorem \ref{theorem existence local section compact and connected for PUH},
 is the construction of universal equivariant
 projective unitary and stable bundles necessary
 for the study of the twisted equivariant K-theory as
 an equivariant parametrized cohomology theory (see \cite[Chapter 5]{BEJU} and \cite[Chapter 6]{Atiyah-Segal:TKT}). The construction of these universal
 bundles relies on the construction of classifying spaces
 for certain families for subgroups, together with explicit
 topological properties that the groups 
 and the associated spaces of homomorphisms need to satisfy.
 In what follows we will review these constructions and 
 we will show the implications that the results proved in the
 previous sections have on the existence of universal
 equivariant projective unitary bundles.

   Let $G$ and $P$ be topological groups. A $G$-equivariant $P$-principal
bundle consists of a $P$-principal bundle $p:E \to X$   together with left $G$ actions on $E$ and $X$ commuting with the right $P$ action on $E$
such that $p$ is $G$-equivariant. For every $e \in E$ we obtain a  local representation $\rho_e: G_{p(e)} \to P$ 
determined by $g^{-1} \cdot e=e \cdot \rho_e(g)$ for $g \in G_{p(e)}$ where $G_{p(e)}$ is the isotropy group 
of $p(e) \in X$.

In \cite[Thm. 11.4]{LueckUribe} it was constructed a universal $G$-equivariant $P$-principal
bundle with a prescribed family of local representations through the use of classifying spaces of
families of subgroups. The fact that these classifying spaces of families of subgroups permitted to obtain
equivariant principal bundles relied on topological properties of the groups $G$ and $P$ and
 on the spaces of prescribed homomorphisms. Let us recall the main ingredients.

A family $\mathcal R$ of local representations for $(G,P)$  is a set of pairs
$(H, \alpha)$, where $H$ is a subgroup of $G$ and $\alpha: H \to P$ is a continuous group homomorphism,
such that the family is closed under finite intersections, under conjugation in $P$ and under conjugation in $G$
(see \cite[Def. 3.3]{LueckUribe} for a detailed description).

It is said \cite[Def. 6.1]{LueckUribe} that the family $\mathcal R$ satisfies Condition (H) if the following holds for every 
$(H, \alpha) \in \mathcal R$:
\begin{itemize}
\item The path component of $\alpha$  in $\hom(H,P)$ is contained in the orbit $\{p \alpha p^{-1} \mid p \in P\}$.
\item The projection $P \to P/P_\alpha$ has a local cross section where $P_{\alpha} = \{p \in P \mid p \alpha p^{-1}= \alpha\}$
is the isotropy group of $\alpha$ under the conjugation action of $P$.
\item The projection $G \to G/H$ has a local cross section.
\item The canonical map \begin{align*}\iota_\alpha : P / P_{\alpha} & \to \hom(H,P) \\ p P_\alpha  & \mapsto p \alpha p^{-1}\end{align*}
 is a homeomorphism into its image.
\end{itemize}

To a family of local representations $\mathcal R$ we can associate a family of subgroups of $G \times P$ consisting of
the set ${\mathcal F}({\mathcal R}): = \{K(H,\alpha) \mid (H, \alpha ) \in {\mathcal R} \}$ where $K(H,\alpha):=\{(g, \alpha(g)) \mid g \in H \}$.  Let $$E(G,P, \mathcal R ):=E_{{\mathcal F}({\mathcal R})}(G \times P)$$ be the classifying space for
the family of subgrups ${\mathcal F}({\mathcal R})$, i.e. a $(G \times P)$-CW-complex whose isotropy groups belong
to ${\mathcal F}({\mathcal R})$ and for which the $K(H, \alpha)$-fixed point set
 $E_{{\mathcal F}({\mathcal R})}(G \times P)^{K(H, \alpha)}$ is nonempty and weakly contractible for every $(H, \alpha) \in \mathcal R$.

Theorem 11.4 of \cite{LueckUribe} claims that if the family of local representations $\mathcal R$ satisfies Condition (H), then
$E(G,P, \mathcal R) \to E(G,P, \mathcal R)/P$ is a $G$-equivariant $P$-principal bundle which is moreover universal
for $G$-equivariant $P$-principal bundles whose local representations appear in $\mathcal R$.

In this paper we are interested in $G$ equivariant projective unitary stable bundles, namely 
$G$-equivariant $\PUH$-principal bundles whose local representations $(H, \alpha)$ consist of
stable homomorphisms $\alpha : H \to \PUH$ as were defined in Definition \ref{definition stable homomorphism in PUH}.
 
Whenever $\PUH$ is endowed with the norm topology (let us denote it by $\PUH_n$),
 $G$ is a topological group and $\mathcal S$ consists
of the family of local representations 
$(H, \alpha)$ where $H$ is a finite subgroup of $G$ and $\alpha \in \homst{H}{\PUH_n}$ is a stable homomorphism.
Theorem 15.12 of \cite{LueckUribe} shows that the bundle $$E(G, \PUH_n, \mathcal S) \to E(G,\PUH_n, \mathcal S)/\PUH_n$$
is a universal $G$ equivariant projective unitary stable bundle for almost free $G$-CW-complexes

It would be expected that a similar statement would hold whenever we expand the family of local representations
for pairs $(H, \alpha)$ where $H$ is a compact Lie group and $\alpha$ is a stable homomorphism. Unfortunately this
is not the case for the following reasons: whenever $H$ is a compact Lie group which is not finite, the space
of stable homomorphisms to $\PUH_n$ in the norm topology is empty, i.e. $\homst{H}{\PUH_n} = \varnothing$. 

If we consider the group $\PUH$ endowed with the strong operator topology, as it is done throughout this article, 
the family of local representations $\mathcal S$ consisting of pairs $(H, \alpha)$ with $H$ a compact Lie
group and $\alpha \in \homst{H}{\PUH}$ does not satisfy Condition (H) of  Theorem 11.4 of \cite{LueckUribe}. 
In particular the canonical map $$\PUH / \PUH_\alpha \to \homst{H}{\PUH}$$ is not a homeomorphism
into its image, since Theorem \ref{no sections homst} implies that canonical map $$\PUH \to \homst{H}{\PUH}, \ \ F \to F\alpha F^{-1}$$
has no local sections.

Nevertheless, by Theorem \ref{theorem existence local section compact and connected for PUH} we know that 
 local lifts exists if we restrict to maps $$B \to \homst{H}{\PUH}$$ with $B$  paracompact. Hence we might say that
 a Weak Condition (H) is satisfied whenever Condition (H) holds on the image of maps $B \to \homst{H}{\PUH}$ where
  $B$ is paracompact. With this setup in mind, we conjecture
 that the space $$ E(G, \PUH, \mathcal S)/ \PUH$$ would become a universal space for $G$-equivariant projective unitary stable
 bundles whenever we restrict our study to the category of paracompact spaces with proper $G$ actions.
 If this were the case, we would have a space
 that would allow us to show that the twisted equivariant
 K-theory is indeed an equivariant parametrized cohomology
 theory as defined in \cite{May:EHCT}.
 
   Finally note that in order for the previous statement
 to be true we would need to be able to generalize Theorem \ref{theorem existence local section compact and connected for PUH}
 for compact Lie groups which are not necessarily connected, and we would need to show that the proof
 of Theorem 11.4 of \cite{LueckUribe} would work if we restrict only to the image of paracompact spaces. These tasks are beyond the scope of this article and we leave
them for further research.

%%%%%%%%%%%%%%%%%%%%%%%%%%%%%%%%%%%%%%%%%%
%\pagestyle{headings}
%\renewcommand{\refname}{Sources}
\bibliographystyle{chicago}
%\input{paper.bibliography}
%\bibliography{referencia}
%%
%\vspace*{-0.5cm}

\renewcommand{\refname}{\bf \small References}

\end{multicols}
\end{small}
\end{document}